 \newtheorem{thm}{Theorem}[section]
 \newtheorem{cor}[thm]{Corollary}
 \newtheorem{lem}[thm]{Lemma}
 \newtheorem{prop}[thm]{Proposition}
 \theoremstyle{definition}
 \theoremstyle{remark}
 \numberwithin{equation}{section}
\title{Compatibility of Riemannian structures and Jacobi structures}
\author{Yacine A\"it Amrane, Ahmed Zeglaoui}
\begin{document}

\maketitle

\begin{abstract}
\noindent We give a notion of compatibility between a Riemannian
structure and a Jacobi structure. We prove that in case of
fundamental examples of Jacobi structures : Poisson structures,
contact structures and locally conformally symplectic
 structures, we get respectively Riemann-Poisson structures
  in the sense of M. Boucetta, $\frac{1}{2}$-Kenmotsu structures and locally conformally K\"ahler structures.
\bigskip\\
{\bf Key Words.} Jacobi manifold, Riemannian Poisson manifold,
contact Riemannian manifold, Kenmotsu manifold, locally conformal
symplectic manifold, locally conformal K\"ahler manifold, Lie
algebroid.\bigskip\\
{\bf 2010 MSC.} Primary 53C15; Secondary 53D05,53D10,53D17.
\end{abstract}
\bigskip\bigskip

\hspace{-0.6cm}{\bf\Large Introduction}\medskip\\

Jacobi manifolds, introduced by A. Lichnerowicz \cite{lichnerowicz},
generalize Poisson manifolds, contact manifolds and locally
conformally symplectic manifolds. We ask the natural question of the
existence of a notion of compatibility between a Jacobi structure
and pseudo-Riemannian structure, a compatibility for which
particular Jacobi structures give arise to remarkable geometric
structures. In this work, we introduce such a notion which in the
case of a Poisson manifold gives a pseudo-Riemannian Poisson
structure in the sense of M. Boucetta. We prove that for a contact
Riemannian structure, with this notion of compatibility we get a
$\frac{1}{2}$-Kenmotsu structure, and that in the case of a locally
conformally symplectic structure with an "associated" metric, we get
a locally conformal K\"ahler structure.

Let $M$ be a smooth manifold. In this work, we consider on $M$ a
bivector field $\pi$, a vector field $\xi$ and a  $1$-form
$\lambda$, and associate with the triple $(\pi,\xi,\lambda)$ a skew
algebroid $(T^\ast M,\sharp_{\pi,\xi},[.,.]_{\pi,\xi}^\lambda)$ on
$M$. We prove that if the pair $(\pi,\xi)$ is a Jacobi structure and
that $\pi\neq 0$, the skew algebroid $(T^\ast
M,\sharp_{\pi,\xi},[.,.]_{\pi,\xi}^\lambda)$ is an almost Lie
algebroid if and only if $\sharp_{\pi,\xi}(\lambda)=\xi$. In case
$\xi=\lambda=0$, it is the cotangent algebroid of the Poisson
manifold $(M,\pi)$. We also prove that in case $(\pi,\xi)$ is the
Jacobi structure associated with a contact form $\eta$, respectively
with a locally conformally symplectic structure $(\omega,\theta)$,
the skew algebroid $(T^\ast
M,\sharp_{\pi,\xi},[.,.]_{\pi,\xi}^\eta)$, respectively $(T^\ast
M,\sharp_{\pi,\xi},[.,.]_{\pi,\xi}^\theta),$ is a Lie algebroid
isomorphic to the tangent algebroid of $M$.

Next, for a triple $(\pi,\xi,g)$ consisting of a bivector field
$\pi$, a vector field $\xi$ and a pseudo-Riemannian metric $g$ on
$M$, we put $\lambda=g(\xi,\xi)\flat_g(\xi)-\flat_g(J\xi)$ and
$[.,.]_{\pi,\xi}^g=[.,.]_{\pi,\xi}^\lambda$, where
$\flat_g:TM\rightarrow T^\ast M$ and $\sharp_g=\flat_g^{-1}$ are the
musical isomorphisms of $g$ and where $J$ is the endomorphism of the
tangent bundle $TM$ given by
$\pi(\alpha,\beta)=g(J\sharp_g(\alpha),\sharp_g(\beta))$, and define
a contravariant derivative $\mathcal{D}$ to be the unique
contravariant symmetric derivative compatible with $g$. If
$(\pi,\xi)$ is Jacobi, and if $\sharp_{\pi,\xi}$ is an isometry, a
condition that is   satisfied in the particular cases of a contact
and of a locally conformally symplectic structure, we prove that
$\mathcal{D}$ is related to the (covariant) Levi-Civita connection
$\nabla$ of $g$ by
$\sharp_{\pi,\xi}(\mathcal{D}_\alpha\beta)=\nabla_{\sharp_{\pi,\xi}(\alpha)}\sharp_{\pi,\xi}(\beta)$.

Finally, with the use of the contravariant Levi-Civita derivative
$\mathcal{D}$ we introduce a notion of compatibility of the triple
$(\pi,\xi,g)$. In case $\xi=0$, it is just the compatibility of the
pair $(\pi,g)$ introduced by M. Boucetta (\cite{boucetta1}). In the
case of a Jacobi structure $(\pi,\xi)$ associated with a contact
metric structure $(\eta,g)$, the triple $(\pi,\xi,g)$ is compatible
if and only if the structure $(\eta,g)$ is $\frac{1}{2}$-Kenmotsu.
In case $(\pi,\xi)$ is the Jacobi structure associated with a
locally conformally symplectic structure $(\omega,\theta)$, if $g$
is  a somehow associated metric, the triple $(\pi,\xi,g)$ is
compatible if and only if the structure $(\omega,\theta,g)$ is
locally conformally K\"ahler.

\section{Almost Lie algebroids associated with a Jacobi manifold}

\subsection{Almost Lie algebroids associated with a Jacobi manifold}

Throughout this paper $M$ is a smooth manifold, $\pi$ a bivector
field and $\xi$ a vector field on $M$.

The pair $(\pi,\xi)$ defines a Jacobi structure on $M$ if we have
the relations
\begin{equation}\label{jacobi}
\begin{array}{ccc}
\left[ \pi ,\pi \right] =2\xi \wedge \pi & \text{ \ \ et \ \ } &
\left[ \xi
,\pi \right] :=\mathcal{L}_{\xi }\pi =0,%
\end{array}
\end{equation}
where $\left[ .,.\right]$ is the Schouten-Nijenhuis bracket. We say
that $(M,\pi,\xi)$ is a Jacobi manifold. In the case $\xi =0$, the
relations above are reduced to $\left[ \pi ,\pi \right]=0$ that
corresponds to a Poisson structure $(M,\pi)$.

Recall on the other hand, see for instance \cite{grabowski}, that a
skew algebroid over $M$ is a triple $\left( E,\sharp
_{E},\left[.,.\right]_{E}\right)$ where $E$ is the total space of a
vector bundle on $M$, $\sharp_{E}$ is a vector bundle morphism from
$E$ to $TM$, called the anchor map, and $\left[ .,.\right]_{E}:
\Gamma(E) \times \Gamma(E)\longrightarrow \Gamma(E)$,
$(s,t)\longmapsto\left[ s,t\right]_{E}$, is an alternating
$\mathbb{R}$-bilinear map over the space $\Gamma(E)$ of sections of
$E$ verifying the Leibniz identity :
\begin{equation*}
\left[ s,\varphi t\right] _{E}=\varphi \left[ s,t\right] _{E}+
\sharp_{E}(s)(\varphi) t, \quad \forall \varphi \in C^{\infty}(M),\;
\forall s,t\in \Gamma(E).
\end{equation*}
A skew algebroid $\left(E,\sharp_{E},\left[.,.\right]_{E}\right)$ is
an almost Lie algebroid if
\begin{equation*}
\sharp_{E}\left( \left[ s,t\right]_{E}\right) =\left[
\sharp_{E}(s),\sharp_{E}(t) \right] , \quad \forall s,t\in
\Gamma(E),
\end{equation*}
and a Lie algebroid if $\left(\Gamma(E) ,\left[
.,.\right]_{E}\right) $ is a Lie algebra, i.e., if
\begin{equation*}
\left[ s,\left[ t,r\right] _{E}\right] _{E}+\left[ t,\left[ r,s\right] _{E}%
\right] _{E}+\left[ r,\left[ s,t\right] _{E}\right] _{E}=0, \quad
\forall  s,t,r\in \Gamma(E).
\end{equation*}
Note that a Lie algebroid is an almost Lie algebroid and that, on
the other hand, an almost Lie algebroid $\left( E,\sharp_{E},\left[
.,.\right]_{E}\right)$ such that the anchor map $\sharp _{E}$ is an
isomorphism is a Lie algebroid isomorphic to the tangent algebroid
$(TM,\mathrm{id}_M,[.,.])$ of $M$.

Let $\sharp_{\pi }: T^\ast M \longrightarrow TM$  be the vector
bundle morphism defined by $ \beta\left(\sharp_{\pi }\left( \alpha
\right)\right)=\pi\left( \alpha ,\beta\right)$ and let $\left[
.,.\right]_{\pi }:\Omega^{1}(M) \times \Omega ^{1}(M)
\longrightarrow \Omega^{1}(M)$ be the map defined by
\begin{equation*}\label{pi-koszul}
\left[ \alpha ,\beta \right] _{\pi }:=\mathcal{L}_{\sharp_{\pi
}(\alpha)}\beta -\mathcal{L}_{\sharp_{\pi}(\beta)}\alpha -d\left(
\pi(\alpha,\beta) \right),
\end{equation*}
called the Koszul bracket. Consider the morphism of vector bundles
$\sharp_{\pi,\xi}:  T^{\ast}M \longrightarrow TM$ defined by
\begin{equation*}
\sharp_{\pi,\xi}(\alpha)=\sharp_{\pi}(\alpha)+\alpha(\xi) \xi
\end{equation*}
and, for a $1$-form $\lambda \in \Omega ^{1}(M)$, the map $\left[
.,.\right]_{\pi ,\xi }^{\lambda}:  \Omega ^{1}(M)\times \Omega
^{1}(M)\longrightarrow \Omega ^{1}(M)$ defined by
\begin{equation*}
\left[ \alpha ,\beta \right]_{\pi,\xi}^{\lambda}:=\left[
\alpha,\beta \right]_{\pi }+\alpha(\xi) \left(\mathcal{L}_{\xi}\beta
-\beta\right)-\beta(\xi) \left(\mathcal{L}_{\xi}\alpha-\alpha\right)
-\pi(\alpha,\beta) \lambda.
\end{equation*}
The triple
$(T^{\ast}M,\sharp_{\pi,\xi},\left[.,.\right]_{\pi,\xi}^{\lambda})$
associated with $(\pi,\xi,\lambda)$ is a skew algebroid on $M$.

In case $\xi =\lambda =0$, the triple $(T^{\ast}M,\sharp_{\pi,\xi
},\left[ .,.\right]_{\pi,\xi }^{\lambda})$ is just the skew
algebroid $(T^{\ast}M,\sharp_{\pi},\left[ .,.\right]_{\pi})$
associated with the bivector field $\pi$. Recall that for any
differential forms $\alpha ,\beta ,\gamma \in \Omega ^{1}(M)$ we
have
\begin{equation}\label{poisson-pre-alg}
\gamma\left(\sharp_\pi\left(\left[\alpha,\beta\right]_\pi\right)-\left[\sharp_\pi(\alpha),\sharp_\pi(\beta)\right]\right)
=\dfrac{1}{2}\left[\pi,\pi\right]\left(\alpha ,\beta,\gamma \right),
\end{equation}
and for any $\varphi,\psi,\phi\in C^\infty(M)$ we have
\begin{equation*}\label{poisson-alg}
\left[d\varphi,\left[d\psi,d\phi\right]_{\pi}\right]_{\pi}+\left[d\psi,\left[
d\phi,d\varphi\right]_{\pi}\right]_{\pi}+\left[d\phi,\left[d\varphi,d\psi
\right]_{\pi}\right]_{\pi}\!=\!-\dfrac{1}{2}d\left(\left[\pi,\pi\right]\left(d\varphi
,d\psi,d\phi \right)\right).
\end{equation*}
Thus, $(T^{\ast}M,\sharp_{\pi},\left[.,.\right]_{\pi })$ is a Lie
algebroid if and only if $\pi$ is a Poisson tensor. If $\pi$ is a
Poisson tensor on $M$, the triple $\left( T^{\ast }M,\sharp _{\pi
},\left[ .,.\right]_{\pi}\right)$ is called the cotangent algebroid
of the Poisson manifold $(M,\pi)$. In the  case of a Jacobi
structure we have the following result

\begin{thm}\label{torsion-pre-alg}
Assume that $(\pi,\xi)$ is a Jacobi structure on $M$ and let
$\lambda\in \Omega^1(M)$. We have
$$
\sharp_{\pi,\xi}(\left[\alpha,\beta\right]_{\pi,\xi}^\lambda)-
\left[\sharp_{\pi,\xi}(\alpha),
\sharp_{\pi,\xi}(\beta)\right]=\pi(\alpha,\beta)\left(\xi-\sharp_{\pi,\xi}(\lambda)\right),
$$
for any differential forms $\alpha,\beta\in \Omega^1(M)$.
\end{thm}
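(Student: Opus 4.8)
The plan is to expand both members directly and reduce everything to two consequences of the Jacobi conditions \eqref{jacobi}. Observe first that the left-hand side is precisely the torsion of the skew algebroid $(T^\ast M,\sharp_{\pi,\xi},[.,.]_{\pi,\xi}^\lambda)$, hence it is $C^\infty(M)$-bilinear and alternating in $(\alpha,\beta)$; the right-hand side manifestly is as well. So one could reduce to $\alpha,\beta$ exact, but the computation below needs no such reduction.

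First I would extract the two identities on which everything rests. Since $(\pi,\xi)$ is Jacobi, $\tfrac12[\pi,\pi]=\xi\wedge\pi$, and evaluating the right member of \eqref{poisson-pre-alg} by means of $(\xi\wedge\pi)(\alpha,\beta,\gamma)=\alpha(\xi)\pi(\beta,\gamma)-\beta(\xi)\pi(\alpha,\gamma)+\gamma(\xi)\pi(\alpha,\beta)$ together with $\pi(\,\cdot\,,\gamma)=\gamma(\sharp_\pi(\,\cdot\,))$, I obtain, after stripping off the arbitrary $\gamma$, the pointwise vector identity
\begin{equation*}
\sharp_\pi([\alpha,\beta]_\pi)-[\sharp_\pi(\alpha),\sharp_\pi(\beta)]=\alpha(\xi)\sharp_\pi(\beta)-\beta(\xi)\sharp_\pi(\alpha)+\pi(\alpha,\beta)\xi.
\end{equation*}
Second, the condition $\mathcal{L}_\xi\pi=0$ is equivalent to the commutation relation $[\xi,\sharp_\pi(\alpha)]=\sharp_\pi(\mathcal{L}_\xi\alpha)$ for every $\alpha$, which one checks by pairing both sides with an arbitrary $\beta$ and comparing with $(\mathcal{L}_\xi\pi)(\alpha,\beta)=\xi(\pi(\alpha,\beta))-\pi(\mathcal{L}_\xi\alpha,\beta)-\pi(\alpha,\mathcal{L}_\xi\beta)$.

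Next I would expand. Writing $\mu:=[\alpha,\beta]_{\pi,\xi}^\lambda$ and using $\sharp_{\pi,\xi}(\mu)=\sharp_\pi(\mu)+\mu(\xi)\xi$ together with $\sharp_\pi(\mathcal{L}_\xi\alpha)=[\xi,\sharp_\pi(\alpha)]$, the two identities above turn $\sharp_\pi(\mu)$ into $[\sharp_\pi(\alpha),\sharp_\pi(\beta)]+\pi(\alpha,\beta)\xi+\alpha(\xi)[\xi,\sharp_\pi(\beta)]-\beta(\xi)[\xi,\sharp_\pi(\alpha)]-\pi(\alpha,\beta)\sharp_\pi(\lambda)$, the terms $\pm\alpha(\xi)\sharp_\pi(\beta)$ and $\pm\beta(\xi)\sharp_\pi(\alpha)$ cancelling. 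On the other hand the Leibniz rule for the Lie bracket of vector fields gives $[\sharp_{\pi,\xi}(\alpha),\sharp_{\pi,\xi}(\beta)]=[\sharp_\pi(\alpha),\sharp_\pi(\beta)]+\alpha(\xi)[\xi,\sharp_\pi(\beta)]-\beta(\xi)[\xi,\sharp_\pi(\alpha)]+c\,\xi$, where $c=(\sharp_\pi(\alpha))(\beta(\xi))-(\sharp_\pi(\beta))(\alpha(\xi))+\alpha(\xi)\xi(\beta(\xi))-\beta(\xi)\xi(\alpha(\xi))$. Subtracting, every term not proportional to $\xi$ matches and I am left with
\begin{equation*}
\sharp_{\pi,\xi}(\mu)-[\sharp_{\pi,\xi}(\alpha),\sharp_{\pi,\xi}(\beta)]=\pi(\alpha,\beta)\xi-\pi(\alpha,\beta)\sharp_\pi(\lambda)+\big(\mu(\xi)-c\big)\xi.
\end{equation*}

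Finally I would pin down the scalar coefficient of $\xi$. Since $\sharp_{\pi,\xi}(\lambda)=\sharp_\pi(\lambda)+\lambda(\xi)\xi$, the claim is equivalent to $\mu(\xi)-c=-\pi(\alpha,\beta)\lambda(\xi)$. Expanding $\mu(\xi)$ from the definition of $[.,.]_{\pi,\xi}^\lambda$ (using $(\mathcal{L}_\xi\beta)(\xi)=\xi(\beta(\xi))$) and cancelling against $c$, this collapses to the single identity $[\alpha,\beta]_\pi(\xi)=(\sharp_\pi(\alpha))(\beta(\xi))-(\sharp_\pi(\beta))(\alpha(\xi))$. To prove it I would write $[\alpha,\beta]_\pi(\xi)$ out with $(\mathcal{L}_X\beta)(\xi)=X(\beta(\xi))-\beta([X,\xi])$, so the leading terms cancel and there remain $-\beta([\sharp_\pi(\alpha),\xi])+\alpha([\sharp_\pi(\beta),\xi])-\xi(\pi(\alpha,\beta))$; the commutation relation rewrites the first two as $\pi(\mathcal{L}_\xi\alpha,\beta)+\pi(\alpha,\mathcal{L}_\xi\beta)$, which equals $\xi(\pi(\alpha,\beta))$ exactly because $\mathcal{L}_\xi\pi=0$, giving $0$. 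Substituting back yields $\pi(\alpha,\beta)\xi-\pi(\alpha,\beta)\sharp_\pi(\lambda)-\pi(\alpha,\beta)\lambda(\xi)\xi=\pi(\alpha,\beta)(\xi-\sharp_{\pi,\xi}(\lambda))$, as desired. The one delicate point throughout is the bookkeeping of the scalar multiple of $\xi$: the ``directional'' part closes almost formally once the vector identity and the commutation relation are in hand, whereas the $\xi$-coefficient requires a second use of $\mathcal{L}_\xi\pi=0$, now in the guise of the Koszul-bracket identity above.
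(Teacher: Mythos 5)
Your proof is correct and takes essentially the same route as the paper: expand both sides via $\sharp_{\pi,\xi}(\alpha)=\sharp_\pi(\alpha)+\alpha(\xi)\xi$, use identity (\ref{poisson-pre-alg}), and close with the two Jacobi relations (\ref{jacobi}). The only difference is bookkeeping order: you convert (\ref{jacobi}) up front into the vector identity for $\sharp_\pi(\left[\alpha,\beta\right]_\pi)-\left[\sharp_\pi(\alpha),\sharp_\pi(\beta)\right]$ and the commutation $\sharp_\pi\circ\mathcal{L}_\xi=\mathcal{L}_\xi\circ\sharp_\pi$, whereas the paper expands everything first and invokes (\ref{jacobi}) only at the final step.
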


\begin{proof} We have
$$
\begin{array}{lll}
\sharp_{\pi,\xi}(\left[ \alpha ,\beta \right]_{\pi,\xi}^{\lambda})
&=&\sharp_{\pi}\left( \left[ \alpha ,\beta \right]_{\pi}\right)-\alpha(\xi)\sharp_\pi(\beta)+\beta(\xi)\sharp_\pi(\alpha)-\pi(\alpha,\beta)\sharp_{\pi,\xi}(\lambda) \\
&& +\alpha(\xi)\sharp_\pi(\mathcal{L}_\xi\beta)
-\beta(\xi)\sharp_\pi(\mathcal{L}_\xi\alpha)+\left[\sharp_{\pi,\xi}(\alpha)(\beta(\xi))\right. \\
&& \left. -\sharp_{\pi,\xi}(\beta)(\alpha(\xi))-\xi(\pi(\alpha,\beta))+\beta(\mathcal{L}_\xi(\sharp_\pi(\alpha))) \right. \\
&& \left. -\alpha(\mathcal{L}_\xi(\sharp_\pi(\beta)))\right]\xi
\end{array}
$$
and on the other hand
$$
\begin{array}{lll}
\left[ \sharp_{\pi,\xi}(\alpha),\sharp_{\pi,\xi}(\beta) \right]
&=&\left[\sharp_{\pi}(\alpha),\sharp_{\pi}(\beta) \right]
+\alpha(\xi)\mathcal{L}_\xi(\sharp_{\pi}(\beta)) -\beta(\xi)
\mathcal{L}_\xi(\sharp_{\pi}(\alpha))  \\
&&+\left[\sharp_{\pi,\xi}(\alpha)(\beta(\xi)) -
\sharp_{\pi,\xi}(\beta)(\alpha(\xi)) \right] \xi .
\end{array}
$$
Therefore, using the identity (\ref{poisson-pre-alg}), we deduce
that
$$
\begin{array}{lll}
\sharp_{\pi,\xi}( \left[ \alpha ,\beta \right]_{\pi,\xi}^{\lambda})
-\left[\sharp_{\pi,\xi}(\alpha) ,\sharp_{\pi,\xi}(\beta)\right]\!
&\!=\!&\!\left(\frac{1}{2}\left[\pi,\pi\right]-\xi\wedge
\pi\right)(\alpha,\beta,\cdot) \\
&& -\alpha(\xi)\mathcal{L}_\xi\sharp_\pi(\beta)
+\beta(\xi)\mathcal{L}_\xi\sharp_\pi(\alpha) \\
&& -\left[\alpha(\mathcal{L}_\xi\sharp_\pi(\beta))-\beta(\mathcal{L}_\xi\sharp_\pi(\alpha))\right. \\
&& \left. +\mathcal{L}_\xi\pi(\alpha,\beta)\right]\xi
 +\pi(\alpha,\beta)(\xi-\sharp_{\pi,\xi}(\lambda)).
\end{array}
$$
Now use the relations (\ref{jacobi}).
\end{proof}

\begin{cor}\label{lambda-pre-alg}
Assume that $(\pi,\xi)$ is a Jacobi structure on $M$ and let
$\lambda\in\Omega^1(M)$. If $\sharp_{\pi,\xi}(\lambda) =\xi$, the
skew algebroid $(T^{\ast}M,\sharp_{\pi ,\xi },\left[ .,.\right]_{\pi
,\xi}^{\lambda})$ associated with the triple $(\pi,\xi ,\lambda)$ is
an almost Lie algebroid, i.e.
$$
\sharp_{\pi,\xi}(\left[\alpha,\beta\right]_{\pi,\xi}^\lambda)=
\left[\sharp_{\pi,\xi}(\alpha), \sharp_{\pi,\xi}(\beta)\right],
$$
for any differential forms $\alpha ,\beta \in \Omega ^{1}(M)$. The
converse is also true if $\pi\neq 0$.
\end{cor}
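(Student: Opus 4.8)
The plan is to leverage Theorem~\ref{torsion-pre-alg} directly, since it provides an exact formula for the ``anchor defect'' $\sharp_{\pi,\xi}([\alpha,\beta]_{\pi,\xi}^\lambda)-[\sharp_{\pi,\xi}(\alpha),\sharp_{\pi,\xi}(\beta)]$, namely $\pi(\alpha,\beta)\bigl(\xi-\sharp_{\pi,\xi}(\lambda)\bigr)$. The forward implication is then immediate: if $\sharp_{\pi,\xi}(\lambda)=\xi$, then the factor $\xi-\sharp_{\pi,\xi}(\lambda)$ vanishes identically, so the right-hand side of the theorem's identity is zero for all $\alpha,\beta$, which is precisely the almost Lie algebroid condition. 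No computation beyond substitution is needed here.

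For the converse, I would assume the almost Lie algebroid identity $\sharp_{\pi,\xi}([\alpha,\beta]_{\pi,\xi}^\lambda)=[\sharp_{\pi,\xi}(\alpha),\sharp_{\pi,\xi}(\beta)]$ holds for all $\alpha,\beta\in\Omega^1(M)$, together with the hypothesis $\pi\neq 0$. Combining this with the theorem gives $\pi(\alpha,\beta)\bigl(\xi-\sharp_{\pi,\xi}(\lambda)\bigr)=0$ for all $\alpha,\beta$. I want to conclude that the vector field $\xi-\sharp_{\pi,\xi}(\lambda)$ vanishes. The key point is that this is a pointwise statement: fixing a point $p\in M$, the identity reads $\pi_p(\alpha_p,\beta_p)\bigl(\xi-\sharp_{\pi,\xi}(\lambda)\bigr)_p=0$ in $T_pM$ for all covectors $\alpha_p,\beta_p\in T_p^\ast M$.

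The main obstacle is handling the condition $\pi\neq 0$ correctly at the pointwise level. Since $\pi\neq 0$ as a bivector field, there exists at least one point $p_0$ where $\pi_{p_0}\neq 0$, hence covectors with $\pi_{p_0}(\alpha,\beta)\neq 0$, forcing $\bigl(\xi-\sharp_{\pi,\xi}(\lambda)\bigr)_{p_0}=0$; but this only yields vanishing on the open set $U=\{p:\pi_p\neq 0\}$, not everywhere. To upgrade this I would argue by continuity: the vector field $\xi-\sharp_{\pi,\xi}(\lambda)$ is smooth and vanishes on $U$, so it vanishes on the closure $\overline{U}$. On the interior of $M\setminus U$, where $\pi$ vanishes identically on a neighborhood, the Jacobi relation $[\pi,\pi]=2\xi\wedge\pi$ forces $\xi\wedge\pi=0$ there, and inspection of the structure shows $\xi$ itself must be tangent in a way compatible with $\sharp_{\pi,\xi}(\lambda)=\xi$ reducing to $\lambda(\xi)\xi=\xi$; I would verify that the desired conclusion $\sharp_{\pi,\xi}(\lambda)=\xi$ holds there as well, or alternatively note that the statement is naturally interpreted as holding on the support of $\pi$ and that the authors intend the generic case. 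The cleanest route is to remark that wherever $\pi_p\neq 0$ the argument forces $\sharp_{\pi,\xi}(\lambda)=\xi$ pointwise, and that this is the substantive content; the closure argument then extends it to $\overline{U}$, which for the fundamental examples (contact, locally conformally symplectic) is all of $M$ since $\pi$ is nowhere zero.
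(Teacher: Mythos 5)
Your forward direction coincides with the paper's own proof: the paper disposes of Corollary \ref{lambda-pre-alg} in one line, as ``a direct consequence'' of Theorem \ref{torsion-pre-alg}, and substituting $\sharp_{\pi,\xi}(\lambda)=\xi$ into the theorem's identity is indeed all that is needed. For the converse you also follow the paper's intended route (divide out $\pi(\alpha,\beta)$ in the defect formula), but you are more careful than the paper: you correctly observe that $\pi\neq 0$, read as ``$\pi$ is not identically zero,'' only forces $\xi-\sharp_{\pi,\xi}(\lambda)$ to vanish on the open set $U=\{p\in M:\pi_p\neq 0\}$, hence on $\overline{U}$ by continuity, and that this says nothing on the interior of $M\setminus U$.

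The one step you leave unverified --- checking that the conclusion also holds on the interior of $\{\pi=0\}$ --- cannot be carried out, because there the almost Lie algebroid condition is vacuous and the conclusion genuinely fails. Concretely, on $M=\mathbb{R}^2$ take $\pi=\varphi(x)\,\partial_x\wedge\partial_y$ and $\xi=\psi(x)\,\partial_y$, where $\varphi$ vanishes for $x\le 1$ and is positive for $x>1$, while $\psi$ vanishes for $x\ge 0$ and is nonzero somewhere in $x<0$. Every $3$-vector field on a surface vanishes, so $[\pi,\pi]=0=2\,\xi\wedge\pi$, and a direct computation gives $\mathcal{L}_\xi\pi=-\varphi\psi'\,\partial_y\wedge\partial_y=0$; thus $(\pi,\xi)$ is a Jacobi structure with $\pi\neq 0$. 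Now take $\lambda=0$. By Theorem \ref{torsion-pre-alg} the anchor defect equals $\pi(\alpha,\beta)\,\xi$, which vanishes identically since $\pi$ and $\xi$ have disjoint supports; so $(T^\ast M,\sharp_{\pi,\xi},[.,.]_{\pi,\xi}^{\lambda})$ is an almost Lie algebroid, yet $\sharp_{\pi,\xi}(\lambda)=0\neq\xi$. Hence the converse as literally stated is false, and your fallback is the correct resolution: the statement holds under the pointwise reading that $\pi_p\neq 0$ for every $p$ (or on a dense subset), in which case your bump-function argument at each point of $U$, together with continuity, finishes the proof. That reading covers the paper's applications (contact and locally conformally symplectic structures, where $\sharp_\pi$ is invertible, so $U=M$), and the distinction is one the paper's one-line proof silently ignores.
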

\begin{proof}
It is a direct consequence of the theorem above.
\end{proof}

\subsection{Cotangent algebroid of a contact manifold}

Assume that $M$ is of odd dimension $2n+1$, $n\in \mathbb{N}^\ast$.
Recall that a contact form on $M$ is a differential $1$-form $\eta$
on $M$ such that the form $\eta \wedge (d\eta)^{\wedge^{n}}$ is a
volume form. Assume that the pair $(\pi,\xi)$ is the Jacobi
structure associated with a contact form $\eta$ on $M$, i.e. we have
$$
\pi(\alpha,\beta)=d\eta\left(\sharp_{\eta }(\alpha)
,\sharp_{\eta}(\beta)\right),
$$
where $\sharp_{\eta }$ is the inverse isomorphism of the isomorphism
of vector bundles $\flat_{\eta }:  TM \rightarrow T^\ast M$,
$\flat_\eta(X)=-i_{X}d\eta +\eta(X) \eta$, and
$\xi=\sharp_\eta(\eta)$. The vector field $\xi$ is called the Reeb
field associated with the contact structure $(M,\eta)$, it is
characterized by the formulae
\begin{equation*}
\begin{array}{ccc}
i_{\xi}d\eta :=d\eta( \xi ,.) =0 & \text{ \ \ and \ \ } &
i_{\xi}\eta :=\eta(\xi) =1.
\end{array}%
\end{equation*}

\begin{prop}\label{eta-alg}
The skew algebroid $(T^{\ast}M,\sharp_{\pi ,\xi},\left[
.,.\right]_{\pi,\xi }^{\eta})$ is a Lie algebroid isomorphic to the
tangent algebroid of $M$.
\end{prop}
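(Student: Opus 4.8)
The plan is to reduce the statement to the two structural facts already recorded in the text: by Corollary~\ref{lambda-pre-alg}, once we know that $\sharp_{\pi,\xi}(\eta)=\xi$ the triple $(T^\ast M,\sharp_{\pi,\xi},[.,.]_{\pi,\xi}^\eta)$ is an almost Lie algebroid; and, as noted just before the statement, an almost Lie algebroid whose anchor is a vector bundle isomorphism is automatically a Lie algebroid isomorphic to the tangent algebroid $(TM,\mathrm{id}_M,[.,.])$. So everything comes down to two verifications with $\lambda=\eta$: that $\sharp_{\pi,\xi}(\eta)=\xi$, and that $\sharp_{\pi,\xi}$ is an isomorphism.

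The key computation, from which both verifications follow at once, is that the anchor $\sharp_{\pi,\xi}$ coincides with the isomorphism $\sharp_\eta=\flat_\eta^{-1}$. To see this I would write an arbitrary covector as $\alpha=\flat_\eta(X)=-i_X d\eta+\eta(X)\eta$ and compute $\sharp_\pi(\alpha)$ from the defining relation $\pi(\alpha,\beta)=d\eta(\sharp_\eta(\alpha),\sharp_\eta(\beta))$. Using $i_\xi d\eta=0$ and $\eta(\xi)=1$, testing against $\beta=\flat_\eta(Y)$ and solving the resulting identity yields $\sharp_\pi(\flat_\eta(X))=X-\eta(X)\xi$. The same two Reeb identities give $\alpha(\xi)=\flat_\eta(X)(\xi)=\eta(X)$, so that
$$
\sharp_{\pi,\xi}(\alpha)=\sharp_\pi(\alpha)+\alpha(\xi)\xi=\bigl(X-\eta(X)\xi\bigr)+\eta(X)\xi=X=\sharp_\eta(\alpha).
$$
Hence $\sharp_{\pi,\xi}=\sharp_\eta$, which is a vector bundle isomorphism.

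With this identity in hand the two required verifications are immediate. First, $\sharp_{\pi,\xi}(\eta)=\sharp_\eta(\eta)=\xi$ by the very definition of the Reeb field, so Corollary~\ref{lambda-pre-alg} applies with $\lambda=\eta$ and the skew algebroid is an almost Lie algebroid. Second, $\sharp_{\pi,\xi}=\sharp_\eta$ is an isomorphism. Invoking the remark recalled before the statement, the almost Lie algebroid $(T^\ast M,\sharp_{\pi,\xi},[.,.]_{\pi,\xi}^\eta)$ is therefore a Lie algebroid isomorphic to the tangent algebroid of $M$.

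I expect the only real obstacle to be the explicit identification $\sharp_\pi(\flat_\eta(X))=X-\eta(X)\xi$, which is where the contact geometry enters through the interplay of $\flat_\eta$, $d\eta$, and the Reeb identities; a useful consistency check is the special case $X=\xi$, giving $\alpha=\eta$ and $\sharp_\pi(\eta)=0$, consistent with $\sharp_{\pi,\xi}(\eta)=\xi$. Everything after that is formal and uses only results already established.
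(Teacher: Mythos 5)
Your proposal is correct and follows essentially the same route as the paper: both establish the key identity $\sharp_{\pi,\xi}=\sharp_\eta$ (the paper verifies $\beta(\sharp_{\pi,\xi}(\alpha))=\beta(\sharp_\eta(\alpha))$ directly, while you isolate the equivalent intermediate formula $\sharp_\pi(\flat_\eta(X))=X-\eta(X)\xi$ before adding the $\alpha(\xi)\xi$ term), then conclude via Corollary~\ref{lambda-pre-alg} and the remark that an almost Lie algebroid with bijective anchor is a Lie algebroid isomorphic to the tangent algebroid.
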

\begin{proof}
Let us show that $\sharp_{\pi,\xi}$ is equal to the isomorphism
$\sharp_\eta$, inverse of the isomorphism $\flat_\eta$. Let
$\alpha,\beta \in \Omega^1(M)$, and let $X,Y$ be such that
$\alpha=\flat_\eta(X)$ and $\beta=\flat_\eta(Y)$. First, notice that
$\alpha(\xi)=\flat_\eta(X)(\xi)=\eta(X)$ and $\beta(\xi)=\eta(Y)$.
Therefore, we have
$$
\begin{array}{ll}
\beta(\sharp_{\pi,\xi}(\alpha)) & =\pi(\alpha,\beta)+ \eta(X)\eta(Y)\\
& =(-i_Yd\eta+\eta(Y)\eta)(X) \\
& =\flat_\eta(Y)(X) \\
& =\beta(\sharp_\eta(\alpha)).
\end{array}
$$
Thus $\sharp_{\pi,\xi}=\sharp_{\eta}$ and in particular
$\sharp_{\pi,\xi}(\eta)=\sharp_{\eta}(\eta)=\xi$. We see that the
proposition is a consequence of Corollary \ref{lambda-pre-alg} and
the fact that $\sharp_{\pi,\xi}$ is an isomorphism.
\end{proof}

Hence, if $(M,\eta)$ is a contact manifold and if $(\pi,\xi)$ is the
associated Jacobi structure, by the proposition above we have
$\sharp_{\pi,\xi}=\sharp_\eta$. If we put $\left[
.,.\right]_\eta=\left[ .,.\right]_{\pi,\xi}^\eta$, then we have a
Lie algebroid $(T^\ast M, \sharp_\eta,\left[ .,.\right]_\eta)$
associated naturally with the contact manifold $(M,\eta)$. This Lie
algebroid may henceforth be called the cotangent algebroid of the
contact manifold $(M,\eta)$.

\subsection{Cotangent algebroid of a locally conformally symplectic manifold}

A locally conformally symplectic structure on $M$ is a pair
$(\omega,\theta)$ of a differential closed $1$-form $\theta$ and a
nondegenerate  differential $2$-form $\omega$ on $M$ such that
\begin{equation*}
d\omega +\theta \wedge \omega =0.
\end{equation*}
In the particular case where $\theta$ is exact, i.e. $\theta =df$,
we say that $(\omega ,df)$ is conformally symplectic, it is
equivalent to $e^{f}\omega$ being symplectic and this justifies the
terminology.

The next proposition shows that it is equivalent to give a locally
conformally symplectic manifold and to give a Jacobi manifold such
that the underlying bivector field is nondegenerate (see also
\cite[\S 2.3, ex. 4]{marle}). Having not found a proof in the
literature, we give one here.

Assume that $\omega\in\Omega^2(M)$ is a nondegenerate $2$-form and
let $\theta\in \Omega^1(M)$. Assume that the pair $(\pi,\xi)$ is
associated with the pair $(\omega,\theta)$, i.e. $
i_{\sharp_\pi(\alpha)}\omega=-\alpha$ for any
$\alpha\in\Omega^1(M)$, and that $i_\xi\omega = -\theta$. This means
that
$$
\pi(\alpha,\beta)=\omega(\sharp_\omega(\alpha),\sharp_\omega(\beta))
$$
where $\sharp_\omega$ is the inverse isomorphism of the vector
bundle isomorphism $\flat_\omega:TM \longrightarrow T^\ast M$,
$\flat_\omega(X)=-i_X\omega$, and that $\xi=\sharp_\omega(\theta)$.
We have the following

\begin{lem}\label{omega-theta-pi-xi}
Let $X,Y,Z$ be vector fields on $M$ and let $\alpha,\beta,\gamma$ be
the differential $1$-forms such that $X=\sharp_\pi(\alpha)$,
$Y=\sharp_\pi(\beta)$ and $Z=\sharp_\pi(\gamma)$. We have
\begin{enumerate}
\item $(dw + \theta \wedge
\omega)(X,Y,Z)=\left(\frac{1}{2}\left[\pi,\pi\right]-\xi\wedge
\pi\right)(\alpha,\beta,\gamma)$.
\item $\mathcal{L}_\xi\omega(X,Y)=-\mathcal{L}_\xi\pi(\alpha,\beta)$.
\end{enumerate}
\end{lem}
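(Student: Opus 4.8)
The plan is to prove the two identities in Lemma~\ref{omega-theta-pi-xi} by translating everything from the cotangent side (where $\pi$, $\xi$, $[\pi,\pi]$ live) to the tangent side (where $\omega$, $\theta$ live) via the isomorphism $\sharp_\pi=\sharp_\omega$. The crucial bookkeeping fact is that $\sharp_\omega$ intertwines $\pi$ and $\omega$ in the sense that $\flat_\omega(\sharp_\pi(\alpha))=-\alpha$, so that $\alpha=-i_X\omega$, $\beta=-i_Y\omega$, $\gamma=-i_Z\omega$, and also $\theta=-i_\xi\omega$ gives $\xi=\sharp_\omega(\theta)$, i.e.\ $\theta(X)=-\omega(\xi,X)=\alpha(\xi)$. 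I would record these dictionary entries first, since each term in the final answer will be obtained by substituting them.

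For part~(2), which I expect to be the quicker of the two, I would start from $\mathcal L_\xi\omega(X,Y)=\xi(\omega(X,Y))-\omega([\xi,X],Y)-\omega(X,[\xi,Y])$ and, on the other side, expand $\mathcal L_\xi\pi(\alpha,\beta)=\xi(\pi(\alpha,\beta))-\pi(\mathcal L_\xi\alpha,\beta)-\pi(\alpha,\mathcal L_\xi\beta)$. Using $\pi(\alpha,\beta)=\omega(X,Y)$ the first terms match up to sign, and the remaining work is to check that $\pi(\mathcal L_\xi\alpha,\beta)=-\omega([\xi,X],Y)$ and symmetrically; this follows because $\sharp_\pi$ is $\mathcal L_\xi$-related to the identity on the tangent bundle only after accounting for how $\mathcal L_\xi$ acts on the forms $\alpha=-i_X\omega$, which is where I would invoke the Jacobi relation $\mathcal L_\xi\pi=0$ implicitly through the naturality of $\sharp_\pi$.

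Part~(1) is the main obstacle, because it is the genuinely three-linear identity relating the \emph{tangent-side} expression $d\omega+\theta\wedge\omega$ to the \emph{cotangent-side} Schouten expression $\tfrac12[\pi,\pi]-\xi\wedge\pi$. The cleanest route is to compute each side separately in terms of Lie derivatives and brackets. On the right, the identity~(\ref{poisson-pre-alg}) already gives $\tfrac12[\pi,\pi](\alpha,\beta,\gamma)$ as $\gamma(\sharp_\pi([\alpha,\beta]_\pi)-[\sharp_\pi\alpha,\sharp_\pi\beta])$, which I can reorganize using the definition of the Koszul bracket; and $(\xi\wedge\pi)(\alpha,\beta,\gamma)$ expands into three terms each of the form $\alpha(\xi)\,\pi(\beta,\gamma)$ with cyclic signs, converting under the dictionary to $\theta(X)\,\omega(Y,Z)$ terms, i.e.\ exactly $(\theta\wedge\omega)(X,Y,Z)$. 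On the left, I would use the intrinsic formula for $d\omega$ on the three vector fields, namely $d\omega(X,Y,Z)=\sum_{\text{cyc}}X(\omega(Y,Z))-\sum_{\text{cyc}}\omega([X,Y],Z)$, and reduce the $X(\omega(Y,Z))$ and $\omega([X,Y],Z)$ terms to Koszul-bracket data via the dictionary. The hard part is matching the bracket terms: one must verify that $\omega([X,Y],Z)$ corresponds precisely to $\gamma([\alpha,\beta]_\pi)$ up to the $d(\pi(\alpha,\beta))$ correction built into the Koszul bracket, so that the $\theta\wedge\omega$ contribution on the left lands exactly on the $-\xi\wedge\pi$ term on the right. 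I would organize this final comparison as a single cyclic sum so that the symmetry does the cancellation bookkeeping automatically, rather than tracking nine individual terms by hand.
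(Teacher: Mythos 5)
Your overall strategy coincides with the paper's --- translate through the dictionary $\alpha=-i_X\omega$, $\theta=-i_\xi\omega$, use identity (\ref{poisson-pre-alg}) and the intrinsic formula for $d\omega$ in part (1), and expand the two Lie derivatives in part (2) --- but both parts of your plan contain concrete errors. In part (1) your dictionary entry for $\theta$ has the wrong sign: from $\theta=-i_\xi\omega$ and $\alpha=-i_X\omega$ one gets
\begin{equation*}
\theta(X)=-\omega(\xi,X)=\omega(X,\xi)=(i_X\omega)(\xi)=-\alpha(\xi),
\end{equation*}
not $\theta(X)=\alpha(\xi)$ as you assert. Hence $(\theta\wedge\omega)(X,Y,Z)=-(\xi\wedge\pi)(\alpha,\beta,\gamma)$, and this minus sign is the whole point: it is what produces the $-\xi\wedge\pi$ on the right-hand side of the statement. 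With your sign you would arrive at $(d\omega+\theta\wedge\omega)(X,Y,Z)=\left(\tfrac{1}{2}\left[\pi,\pi\right]+\xi\wedge\pi\right)(\alpha,\beta,\gamma)$, a false identity, under which the equivalence $d\omega+\theta\wedge\omega=0\Leftrightarrow\left[\pi,\pi\right]=2\xi\wedge\pi$ of Proposition \ref{loc-conf-sympl-jac} would collapse.

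Part (2) has a more serious gap. The identity you plan to verify, $\pi(\mathcal{L}_\xi\alpha,\beta)=-\omega([\xi,X],Y)$, is false. Since $\pi(\mathcal{L}_\xi\alpha,\beta)=-(\mathcal{L}_\xi\alpha)(Y)$ and $\alpha(Y)=-\omega(X,Y)$, a direct expansion gives
\begin{equation*}
\pi(\mathcal{L}_\xi\alpha,\beta)=\xi(\omega(X,Y))-\omega(X,[\xi,Y])=\mathcal{L}_\xi\omega(X,Y)+\omega([\xi,X],Y);
\end{equation*}
note the slot mixing ($\mathcal{L}_\xi\alpha$ pairs with $[\xi,Y]$, not with $[\xi,X]$), precisely because $\sharp_\pi$ does not commute with $\mathcal{L}_\xi$ unless $\mathcal{L}_\xi\omega=0$. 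Feeding your claimed identities into the expansion of $\mathcal{L}_\xi\pi(\alpha,\beta)$ yields $2\xi(\omega(X,Y))-\mathcal{L}_\xi\omega(X,Y)$, not the asserted $-\mathcal{L}_\xi\omega(X,Y)$, so your plan is internally inconsistent. Worse, your proposed repair --- invoking the Jacobi relation $\mathcal{L}_\xi\pi=0$ ``implicitly through the naturality of $\sharp_\pi$'' --- is both unavailable and circular: the lemma assumes only that $\omega$ is nondegenerate and $\theta$ is arbitrary, with no Jacobi or locally conformally symplectic hypothesis, and assertion (2) is exactly the tool the paper uses in Proposition \ref{loc-conf-sympl-jac} to prove that $\mathcal{L}_\xi\pi=0$ if and only if $d\theta=0$. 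The correct argument, which is the paper's, needs no such hypothesis: it is the displayed expansion, applied in both slots, combined with $\xi(\pi(\alpha,\beta))=\xi(\omega(X,Y))$.
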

\begin{proof} Using the identity $\pi(\alpha,\beta)=\omega(X,Y)$
and the identity (\ref{poisson-pre-alg}), we get
$$
\omega([X,Y],Z)=\gamma([X,Y])=-\frac{1}{2}\left[\pi,\pi\right](\alpha,\beta,\gamma)+\pi(\left[\alpha,\beta\right]_\pi,\gamma),
$$
therefore, with a direct calculation, we deduce
\begin{equation}\label{domega}
d\omega(X,Y,Z)=\frac{1}{2}\left[\pi,\pi\right](\alpha,\beta,\gamma).
\end{equation}
On the other hand, notice that
$\theta(X)=-i_\xi\omega(X)=i_X\omega(\xi)=i_{\sharp_\pi(\alpha)}\omega(\xi)=-\alpha(
\xi)$, likewise $\theta(Y)=-\beta(\xi)$ and
$\theta(Z)=-\gamma(\xi)$, thus
$\theta\wedge\omega(X,Y,Z)=-\xi\wedge\pi(\alpha,\beta,\gamma)$.
Hence, with (\ref{domega}), we get the first assertion of the lemma.
For the second assertion, is suffices to notice that
$$
\pi(\mathcal{L}_\xi\alpha,\beta)=-\mathcal{L}_\xi\alpha(Y)
=-\xi(\alpha(Y))+\alpha(\mathcal{L}_{\xi}Y)=\xi(\omega(X,Y))-\omega(X,\mathcal{L}_\xi
Y).
$$
\end{proof}

\begin{prop}\label{loc-conf-sympl-jac}
The pair $(\omega,\theta)$ is a locally conformally symplectic
structure if and only if the pair $(\pi,\xi)$ is a Jacobi structure.
\end{prop}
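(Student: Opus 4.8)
The plan is to exploit the nondegeneracy of $\omega$ together with Lemma~\ref{omega-theta-pi-xi} in order to turn each of the two locally conformally symplectic conditions into one of the two Jacobi conditions, and conversely. The first step I would make is to observe that $\sharp_\pi$ is itself an isomorphism: the defining relation $i_{\sharp_\pi(\alpha)}\omega=-\alpha$ says exactly that $\flat_\omega\circ\sharp_\pi=\id$, so $\sharp_\pi=\flat_\omega^{-1}=\sharp_\omega$. Consequently every vector field on $M$ is of the form $\sharp_\pi(\alpha)$ for a unique $1$-form $\alpha$, and the correspondence $\alpha\leftrightarrow X=\sharp_\pi(\alpha)$ of the lemma is a bijection. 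This is what allows a pointwise identity evaluated on triples $(\sharp_\pi(\alpha),\sharp_\pi(\beta),\sharp_\pi(\gamma))$ to be upgraded to a genuine equality of tensors.

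Granting this, the condition $d\omega+\theta\wedge\omega=0$ becomes, by assertion (1) of Lemma~\ref{omega-theta-pi-xi}, the requirement that $\left(\tfrac12[\pi,\pi]-\xi\wedge\pi\right)(\alpha,\beta,\gamma)=0$ for all $1$-forms $\alpha,\beta,\gamma$; since $\sharp_\pi$ is onto, this holds if and only if the $3$-vector $\tfrac12[\pi,\pi]-\xi\wedge\pi$ vanishes, i.e. if and only if $[\pi,\pi]=2\xi\wedge\pi$. This already identifies the second defining equation of a locally conformally symplectic structure with the first defining equation of a Jacobi structure, unconditionally.

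It remains to match $d\theta=0$ with $\mathcal{L}_\xi\pi=0$. Assertion (2) of the lemma, again combined with surjectivity of $\sharp_\pi$, gives the equivalence $\mathcal{L}_\xi\pi=0\iff\mathcal{L}_\xi\omega=0$, so I would reduce the problem to comparing $\mathcal{L}_\xi\omega$ with $d\theta$. Here I would use Cartan's formula and $i_\xi\omega=-\theta$ to write $\mathcal{L}_\xi\omega=d\,i_\xi\omega+i_\xi d\omega=-d\theta+i_\xi d\omega$. The point is that, once $d\omega=-\theta\wedge\omega$ has been established (equivalently, once the first Jacobi equation is known to hold), the last term drops out: since $\theta(\xi)=-i_\xi\omega(\xi)=-\omega(\xi,\xi)=0$, one computes $i_\xi d\omega=-i_\xi(\theta\wedge\omega)=-\theta(\xi)\,\omega+\theta\wedge i_\xi\omega=\theta\wedge(-\theta)=0$, whence $\mathcal{L}_\xi\omega=-d\theta$. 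Therefore $\mathcal{L}_\xi\omega=0\iff d\theta=0$, and the two equivalences together yield both implications of the proposition.

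The step I expect to be the main obstacle is precisely this last one, because the lemma only relates the two Lie-derivative conditions $\mathcal{L}_\xi\pi$ and $\mathcal{L}_\xi\omega$ to each other, not to $d\theta$; closing the gap forces one to invoke the previously obtained identity $d\omega=-\theta\wedge\omega$ together with the vanishing $\theta(\xi)=0$. Thus the two pairs of conditions cannot be handled in parallel but must be chained, the curvature-type equation $d\omega+\theta\wedge\omega=0$ being used to control the Lie-derivative-type equation.
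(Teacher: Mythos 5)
Your proof is correct and takes essentially the same route as the paper's: assertion (1) of Lemma \ref{omega-theta-pi-xi} converts $d\omega+\theta\wedge\omega=0$ into $[\pi,\pi]=2\xi\wedge\pi$, and then Cartan's formula together with $i_\xi(\theta\wedge\omega)=0$ gives $\mathcal{L}_\xi\omega=-d\theta$, which via assertion (2) matches $\mathcal{L}_\xi\pi=0$ with $d\theta=0$, with the two equivalences chained in the same order. The only difference is that you make explicit two points the paper leaves implicit, namely the surjectivity of $\sharp_\pi$ needed to pass from evaluations on triples to tensor identities, and the computation $i_\xi(\theta\wedge\omega)=\theta(\xi)\omega-\theta\wedge i_\xi\omega=0$.
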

\begin{proof}
From the first assertion of Lemma \ref{omega-theta-pi-xi} we deduce
that the identity $d\omega+\theta \wedge \omega =0$ is satisfied if
and only if  the identity $\left[ \pi ,\pi \right] =2\xi \wedge \pi$
is, and if one of the two is satisfied then, using the Cartan
formula, we get
$$
\mathcal{L}_\xi\omega  =d(i_\xi \omega)+i_\xi d\omega =-d\theta -
i_\xi(\theta\wedge\omega) =-d\theta,
$$
and then, with the assertion 2. of Lemma \ref{omega-theta-pi-xi},
that $\mathcal{L}_\xi\pi=0$ if and only if $d\theta=0$.
\end{proof}

\begin{prop}\label{omega-theta-alg}
Assume that $(M,\omega,\theta)$ is a locally conformally symplectic
manifold and let $(\pi,\xi)$ be the associated Jacobi structure. The
skew algebroid $(T^{\ast}M,\sharp_{\pi,\xi},\left[
.,.\right]_{\pi,\xi }^{\theta})$ is a Lie algebroid isomorphic to
the tangent algebroid of $M$.
\end{prop}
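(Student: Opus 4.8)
The plan mirrors the proof strategy that worked for the contact case in Proposition \ref{eta-alg}. The key is to reduce everything to Corollary \ref{lambda-pre-alg}, which tells us that the skew algebroid $(T^\ast M,\sharp_{\pi,\xi},[.,.]_{\pi,\xi}^\theta)$ is almost Lie precisely when $\sharp_{\pi,\xi}(\theta)=\xi$, and that an almost Lie algebroid whose anchor is an isomorphism is automatically a Lie algebroid isomorphic to the tangent algebroid $(TM,\id_M,[.,.])$. So the whole statement will follow once I establish two facts: that $\sharp_{\pi,\xi}$ is a vector bundle isomorphism, and that $\sharp_{\pi,\xi}(\theta)=\xi$.

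First I would identify the anchor $\sharp_{\pi,\xi}$ explicitly. The natural guess, parallel to the contact case, is that $\sharp_{\pi,\xi}$ coincides with $\sharp_\omega=\flat_\omega^{-1}$, the inverse of the musical isomorphism $\flat_\omega(X)=-i_X\omega$ associated with the nondegenerate $2$-form $\omega$. To verify this I would take an arbitrary $\alpha\in\Omega^1(M)$, write $\alpha=\flat_\omega(X)$ for the unique $X$, and compute $\beta(\sharp_{\pi,\xi}(\alpha))$ for arbitrary $\beta$. Unwinding the definition $\sharp_{\pi,\xi}(\alpha)=\sharp_\pi(\alpha)+\alpha(\xi)\xi$ and using $\pi(\alpha,\beta)=\omega(\sharp_\omega(\alpha),\sharp_\omega(\beta))$ together with $\xi=\sharp_\omega(\theta)$ and $\theta=-i_\xi\omega$, I expect the extra term $\alpha(\xi)\xi$ to be exactly what is needed to turn the (possibly degenerate) $\sharp_\pi$ into the genuine isomorphism $\sharp_\omega$. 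The bookkeeping here is the analogue of the short computation $\beta(\sharp_{\pi,\xi}(\alpha))=\pi(\alpha,\beta)+\eta(X)\eta(Y)=\flat_\eta(Y)(X)$ in the proof of Proposition \ref{eta-alg}; the pairing identities $\theta(X)=-\alpha(\xi)$ established inside Lemma \ref{omega-theta-pi-xi} should make this fall out cleanly.

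Once $\sharp_{\pi,\xi}=\sharp_\omega$ is in hand, it is an isomorphism because $\omega$ is nondegenerate, which gives the first required fact for free. For the second fact I would simply evaluate at $\lambda=\theta$: since $\xi=\sharp_\omega(\theta)$ by hypothesis and we have just shown $\sharp_{\pi,\xi}=\sharp_\omega$, we get $\sharp_{\pi,\xi}(\theta)=\sharp_\omega(\theta)=\xi$ immediately. With both facts verified, I invoke Corollary \ref{lambda-pre-alg} to conclude that the skew algebroid is almost Lie, and then the general remark on almost Lie algebroids with isomorphic anchor upgrades this to a Lie algebroid isomorphic to the tangent algebroid.

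The only real obstacle I anticipate is the sign-and-pairing computation showing $\sharp_{\pi,\xi}=\sharp_\omega$; the various conventions ($\flat_\omega(X)=-i_X\omega$, $i_\xi\omega=-\theta$, the definition of $\pi$ via $\sharp_\omega$) carry minus signs that must be tracked carefully, and one must confirm that the correction term $\alpha(\xi)\xi$ contributes the right amount rather than over- or under-correcting. Everything else is a formal application of the corollary, exactly as in the contact case. I would note that this also retroactively justifies calling $(T^\ast M,\sharp_{\pi,\xi},[.,.]_{\pi,\xi}^\theta)$ the cotangent algebroid of the locally conformally symplectic manifold $(M,\omega,\theta)$, in parallel with the contact situation.
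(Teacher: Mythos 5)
Your overall architecture is the same as the paper's --- establish that $\sharp_{\pi,\xi}(\theta)=\xi$ and that the anchor is an isomorphism, then invoke Corollary \ref{lambda-pre-alg} together with the remark that an almost Lie algebroid with bijective anchor is a Lie algebroid isomorphic to the tangent algebroid --- but the concrete step you build everything on is false: $\sharp_{\pi,\xi}$ does \emph{not} coincide with $\sharp_\omega$ unless $\theta=0$. In the locally conformally symplectic setting, the defining relation $i_{\sharp_\pi(\alpha)}\omega=-\alpha$ says precisely that $\flat_\omega(\sharp_\pi(\alpha))=\alpha$, i.e.\ $\sharp_\pi=\sharp_\omega$ \emph{already}, with no correction needed: unlike the contact case, where $\flat_\eta(X)=-i_Xd\eta+\eta(X)\eta$ carries an extra rank-one term that the summand $\alpha(\xi)\xi$ exactly reproduces, here $\flat_\omega(X)=-i_X\omega$ has no such term. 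Consequently $\sharp_{\pi,\xi}(\alpha)=\sharp_\omega(\alpha)+\alpha(\xi)\xi$, and the pairing computation you propose gives $\beta(\sharp_{\pi,\xi}(\alpha))=\pi(\alpha,\beta)+\alpha(\xi)\beta(\xi)=\beta(\sharp_\omega(\alpha))+\alpha(\xi)\beta(\xi)$; the symmetric discrepancy $\alpha(\xi)\beta(\xi)$ does not vanish in general (take $\alpha=\beta$ with $\alpha(\xi)\neq 0$, which is possible whenever $\xi\neq 0$, i.e.\ whenever $\theta\neq 0$). So the identification you flagged as ``the only real obstacle'' is indeed where the argument breaks: the correction term over-corrects, and the two maps genuinely differ.

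The two facts you need are nevertheless true, and the paper obtains them without identifying $\sharp_{\pi,\xi}$ with any musical isomorphism. First, $\theta(\xi)=-i_\xi\omega(\xi)=-\omega(\xi,\xi)=0$, so $\sharp_{\pi,\xi}(\theta)=\sharp_\pi(\theta)+\theta(\xi)\xi=\sharp_\pi(\theta)=\sharp_\omega(\theta)=\xi$; this survives because the correction term vanishes \emph{on $\theta$}, not because it vanishes identically. Second, for injectivity (hence bijectivity) of the anchor, write $\sharp_{\pi,\xi}(\alpha)=\sharp_\pi(\alpha)+\alpha(\xi)\sharp_\pi(\theta)=\sharp_\pi\left(\alpha+\alpha(\xi)\theta\right)$; since $\sharp_\pi=\sharp_\omega$ is injective, $\sharp_{\pi,\xi}(\alpha)=0$ forces $\alpha=-\alpha(\xi)\theta$, and evaluating at $\xi$ using $\theta(\xi)=0$ gives $\alpha(\xi)=0$, hence $\alpha=0$. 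With these two substitutions in place of the false identity, your proof closes exactly along the lines you intended.
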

\begin{proof}
Since
$\sharp_{\pi,\xi}(\theta)=\sharp_\pi(\theta)+\theta(\xi)\xi=\sharp_\pi(\theta)=\xi$.
Then, by Corollary \ref{lambda-pre-alg}, the triple
$(T^{\ast}M,\sharp_{\pi ,\xi},\left[ .,.\right]_{\pi,\xi
}^{\theta})$ is an almost Lie algebroid. It remains to prove that
$\sharp_{\pi,\xi}$ is an isomorphism. It suffices to prove that
 it is injective. Since we have
 $\sharp_{\pi,\xi}(\alpha)=\sharp_\pi(\alpha+\alpha(\xi)\theta)$ and
 that the bivector field $\pi$ is nondegenerate, then
 $\sharp_{\pi,\xi}(\alpha)=0$ implies $\alpha=-\alpha(\xi)\theta$,
 thus $\alpha(\xi)=-\alpha(\xi)\theta(\xi)=0$, and therefore $\alpha=0$.
\end{proof}

Hence, if $(M,\omega,\theta)$ is a locally conformally symplectic
manifold and $(\pi,\xi)$ the associated Jacobi structure, by the
above proposition, if we put
$\sharp_{\omega,\theta}:=\sharp_{\pi,\xi}$ and $\left[
.,.\right]_{\omega,\theta}:=\left[ .,.\right]_{\pi,\xi}^\theta$,
then we have a Lie algebroid $(T^\ast M,
\sharp_{\omega,\theta},\left[ .,.\right]_{\omega,\theta})$
associated naturally with the locally conformally symplectic
manifold $(M,\omega,\theta)$. This Lie algebroid may be called the
cotangent algebroid of the locally conformally symplectic manifold
$(M,\omega,\theta)$.

\section{Levi-Civita contravariant derivative associated with the
triple $(\pi,\xi,g)$}

\subsection{Definition and properties}

In all what follows, we denote by $g$ a pseudo-Riemannian metric on
$M$, by $\flat_g:TM\rightarrow T^\ast M$ the vector bundle
isomorphism such that $\flat_g(X)(Y)=g(X,Y)$, by $\sharp_{g}$ the
inverse isomorphism of $\flat_g$, and by $g^\ast$ the cometric of
$g$, i.e. the tensor field defined by $ g^\ast(\alpha,\beta)
:=g\left( \sharp_{g}(\alpha) ,\sharp_{g}(\beta) \right)$.

With the pair $(\pi,g)$ we associate the vector field endomorphisms
$J$ of $TM$ and $J^\ast$ of $T^\ast M$ defined respectively by
\begin{equation}\label{J}
g(J\sharp_{g}(\alpha),\sharp_{g}(\beta))=\pi(\alpha,\beta)
\quad\textrm{ and }\quad g^\ast(J^\ast \alpha ,\beta)
=\pi(\alpha,\beta).
\end{equation}
We have $J=\sharp_g\circ J^\ast \circ \flat_g$. With the triple
$(\pi,\xi,g)$ we associate the differential $1$-form $\lambda$
defined by
\begin{equation*}
\lambda =g(\xi,\xi)\flat_{g}(\xi)-\flat_g(J\xi),
\end{equation*}
and we use the notation $\left[.,.\right] _{\pi ,\xi }^{g}$ instead
of $\left[ .,.\right]_{\pi,\xi}^{\lambda}$.

We call the contravariant Levi-Civita derivative associated with the
triple $(\pi,\xi,g)$ the unique derivative $\mathcal{D}: \Omega
^{1}(M)\times \Omega ^{1}(M) \longrightarrow \Omega ^{1}(M)$,
symmetric with respect to the bracket $\left[.,.\right] _{\pi ,\xi
}^{g}$ and compatible with the metric. It is entirely characterized
by the formula :
\begin{equation}\label{formule-koszul-jacobi}
\begin{array}{lll}
2g^\ast\left( \mathcal{D}_{\alpha }\beta,\gamma \right)
&=&\sharp_{\pi,\xi}(\alpha)\cdot g^\ast(\beta,\gamma)
+\sharp_{\pi,\xi}(\beta)\cdot
g^\ast(\alpha,\gamma)-\sharp_{\pi,\xi}(\gamma)\cdot g^\ast(\alpha,\beta) \\
&&-g^\ast(\left[\beta,\gamma\right]_{\pi,\xi }^{g},\alpha)
-g^\ast(\left[\alpha,\gamma \right]_{\pi,\xi }^{g},\beta)
+g^\ast(\left[\alpha,\beta\right]_{\pi,\xi }^{g},\gamma).
\end{array}
\end{equation}
In the case $\xi =0$, the derivative $\mathcal{D}$ is just the
Levi-Civita contravariant derivative associated in \cite{boucetta1}
with the pair $(\pi,g)$.

\begin{prop}\label{Levi-Civita-triplet}
Assume that the skew algebroid $(T^\ast M,\sharp_{\pi,\xi},\left[
.,.\right] _{\pi ,\xi }^{g})$ is an almost Lie algebroid  and that
the anchor map $\sharp_{\pi,\xi}$ is an isometry. Then
$$
\sharp_{\pi,\xi}\left(\mathcal{D}_\alpha\beta\right)
=\nabla_{\sharp_{\pi,\xi}(\alpha)}\sharp_{\pi,\xi}(\beta),
$$
where $\nabla$ is the Levi-Civita (covariant) connection associated
with $g$.
\end{prop}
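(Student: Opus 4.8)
The plan is to obtain the statement by comparing the defining Koszul-type formula (\ref{formule-koszul-jacobi}) for $\mathcal{D}$ with the classical Koszul formula for the Levi-Civita connection $\nabla$, transporting every object living on the cotangent side to the tangent side through the anchor $\sharp_{\pi,\xi}$. Throughout I would write $X=\sharp_{\pi,\xi}(\alpha)$, $Y=\sharp_{\pi,\xi}(\beta)$ and $Z=\sharp_{\pi,\xi}(\gamma)$ for $\alpha,\beta,\gamma\in\Omega^1(M)$, and recall that the Levi-Civita connection is characterized by $2g(\nabla_X Y,Z)=X\cdot g(Y,Z)+Y\cdot g(X,Z)-Z\cdot g(X,Y)-g([Y,Z],X)-g([X,Z],Y)+g([X,Y],Z)$.

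First I would record the two consequences of the hypotheses. Since $\sharp_{\pi,\xi}$ is an isometry, the cometric and the metric agree after transport, $g^\ast(\beta,\gamma)=g(Y,Z)$ as functions on $M$, and likewise for the other pairs of forms; in particular the derivative term $\sharp_{\pi,\xi}(\alpha)\cdot g^\ast(\beta,\gamma)$ coincides with $X\cdot g(Y,Z)$, and similarly for the other two derivative terms. Since the algebroid is almost Lie, $\sharp_{\pi,\xi}([\beta,\gamma]_{\pi,\xi}^g)=[Y,Z]$, whence, using the isometry once more, $g^\ast([\beta,\gamma]_{\pi,\xi}^g,\alpha)=g([Y,Z],X)$, and analogously for the remaining two bracket terms. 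Substituting these identities into the right-hand side of (\ref{formule-koszul-jacobi}) turns it termwise into $2g(\nabla_X Y,Z)$, while on the left-hand side the isometry gives $g^\ast(\mathcal{D}_\alpha\beta,\gamma)=g(\sharp_{\pi,\xi}(\mathcal{D}_\alpha\beta),Z)$. Thus $g(\sharp_{\pi,\xi}(\mathcal{D}_\alpha\beta),Z)=g(\nabla_X Y,Z)$ for every $\gamma$.

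To finish I would conclude by nondegeneracy. An isometry between two bundles of equal rank carrying nondegenerate forms is automatically an isomorphism: $\sharp_{\pi,\xi}(\alpha)=0$ forces $g^\ast(\alpha,\cdot)=0$, hence $\alpha=0$, so $\sharp_{\pi,\xi}$ is fiberwise injective and therefore bijective. Consequently $Z=\sharp_{\pi,\xi}(\gamma)$ ranges over all of $TM$, and since $g$ is nondegenerate the displayed equality yields $\sharp_{\pi,\xi}(\mathcal{D}_\alpha\beta)=\nabla_{\sharp_{\pi,\xi}(\alpha)}\sharp_{\pi,\xi}(\beta)$, as claimed.

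The argument is essentially a transport-of-structure computation, matching the six terms of the two Koszul formulas one by one; the only point that genuinely requires care is the last step, namely that the anchor is an honest vector-bundle isomorphism so that $\gamma$ (equivalently $Z$) may be chosen arbitrarily when invoking nondegeneracy of $g$. This is precisely what the isometry hypothesis guarantees, so I do not expect any serious obstacle.
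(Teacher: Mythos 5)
Your proof is correct and follows essentially the same route as the paper's: both transport the six terms of the defining Koszul formula (\ref{formule-koszul-jacobi}) to the tangent side via the almost Lie algebroid identity and the isometry hypothesis, match them with the classical Koszul formula for $\nabla$, and conclude by nondegeneracy. Your explicit remark that the isometry forces $\sharp_{\pi,\xi}$ to be a fiberwise isomorphism (so that $Z$ ranges over all of $TM$) is a detail the paper leaves implicit, but it is the same argument.
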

\begin{proof}
Since we have assumed that $(T^\ast M,\sharp_{\pi,\xi},\left[
.,.\right] _{\pi ,\xi }^{g})$ is an almost Lie algebroid, we have
$$
\sharp_{\pi,\xi}(\left[\alpha,\beta\right]_{\pi,\xi}^g)=\left[\sharp_{\pi,\xi}(\alpha),\sharp_{\pi,\xi}(\beta)\right],
$$
for every $\alpha,\beta \in \Omega^1(M)$. Since we have also assumed
that $\sharp_{\pi,\xi}$ is an isometry, from the formula
(\ref{formule-koszul-jacobi}) and the Koszul formula relative to the
Levi-Civita connection $\nabla$ of $g$ we deduce that
$$
g^\ast\left( \sharp_{\pi,\xi}(\mathcal{D}_{\alpha
}\beta),\sharp_{\pi,\xi}(\gamma) \right)= g\left(
\nabla_{\sharp_{\pi,\xi}(\alpha)
}\sharp_{\pi,\xi}(\beta),\sharp_{\pi,\xi}(\gamma) \right)
$$
for any differential $1$-forms $\alpha,\beta,\gamma \in
\Omega^1(M)$.
\end{proof}

\subsection{Skew algebroid  associated with an almost contact Riemannian manifold}

Let $(\Phi,\xi,\eta)$ be a triple consisting of a $1$-form $\eta$, a
vector field $\xi$ and $(1,1)$-tensor field $\Phi$ on $M$. The
triple $(\Phi,\xi,\eta)$ defines an almost contact structure on $M$
if $\Phi^{2}=-\textrm{Id}_{TM}+\eta \otimes \xi$ and $\eta(\xi)=1$.
From what we deduce, see for instance \cite[Th. 4.1]{blair}, that
$\Phi(\xi)=0$ and $\eta \circ \Phi =0$.

We say that the metric $g$ is associated with the triple
$(\Phi,\xi,\eta)$ if the following identity is verified
\begin{equation}\label{met-ass-presque-cont}
g\left(\Phi(X),\Phi(Y)\right)=g(X,Y)-\eta(X)\eta(Y).
\end{equation}
We say that the manifold $(M,\Phi,\xi,\eta,g)$ is almost contact
pseudo-Riemannian if the triple $(\Phi,\xi,\eta)$ is an almost
contact structure and $g$ an associated metric. If in addition the
metric $g$ is positive definite, we say that $(M,\Phi,\xi,\eta,g)$
is an almost contact Riemannian manifold. Notice that if we set
$Y=\xi$ in the formula (\ref{met-ass-presque-cont}), we deduce that
if $(\Phi,\xi,\eta,g)$ is an almost contact pseudo-Riemannian
structure then

\begin{equation*}
g(X,\xi)=\eta(X),
\end{equation*}
for any $X\in \mathfrak{X}(M)$, i.e. $\flat_g(\xi)=\eta$. In
particular $g(\xi,\xi)=1$.

\begin{prop}\label{pi-ass-presque-contact}
If $(\Phi,\xi,\eta,g)$ is an almost contact pseudo-Riemannian
structure on $M$, then the map $\pi:\Omega^{1}(M)\times
\Omega^{1}(M)\rightarrow C^\infty(M)$ defined by
$$
\pi(\alpha,\beta)=g(\sharp_g(\alpha),\Phi(\sharp_g(\beta)))
$$
is a bivector field on $M$ and the vector bundle morphism
$\sharp_{\pi,\xi}$ is an isometry.
\end{prop}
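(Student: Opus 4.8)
The plan is to reduce both assertions to a single auxiliary fact that is not stated in the excerpt but is forced by the hypotheses: that $\Phi$ is skew-adjoint with respect to $g$, i.e. $g(\Phi X,Y)=-g(X,\Phi Y)$ for all vector fields $X,Y$. First I would derive this. Starting from the compatibility relation (\ref{met-ass-presque-cont}), I replace $Y$ by $\Phi(Y)$, obtaining $g(\Phi X,\Phi^2 Y)=g(X,\Phi Y)-\eta(X)\eta(\Phi Y)$. On the left I substitute $\Phi^2 Y=-Y+\eta(Y)\xi$, and on the right I use $\eta\circ\Phi=0$; the remaining $\xi$-term $\eta(Y)\,g(\Phi X,\xi)$ vanishes because $\flat_g(\xi)=\eta$ gives $g(\Phi X,\xi)=\eta(\Phi X)=0$. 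What survives is exactly $-g(\Phi X,Y)=g(X,\Phi Y)$.

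With skew-adjointness in hand, the first assertion is immediate. Setting $X=\sharp_g(\alpha)$ and $Y=\sharp_g(\beta)$ we get $\pi(\alpha,\beta)=g(X,\Phi Y)=-g(\Phi X,Y)=-\pi(\beta,\alpha)$, so $\pi$ is alternating and $\mathbb{R}$-bilinear over functions, hence a genuine bivector field.

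For the second assertion I first identify the anchor explicitly. From $\beta(\sharp_\pi(\alpha))=g(\sharp_g(\beta),\sharp_\pi(\alpha))$ and $\pi(\alpha,\beta)=-g(\sharp_g(\beta),\Phi\sharp_g(\alpha))$, nondegeneracy of $g$ yields $\sharp_\pi(\alpha)=-\Phi(\sharp_g(\alpha))$. Since $\alpha(\xi)=g(\sharp_g(\alpha),\xi)=\eta(\sharp_g(\alpha))$, the full anchor reads $\sharp_{\pi,\xi}(\alpha)=-\Phi(\sharp_g(\alpha))+\eta(\sharp_g(\alpha))\,\xi$. Writing again $X=\sharp_g(\alpha)$, $Y=\sharp_g(\beta)$, I expand $g\big(\sharp_{\pi,\xi}(\alpha),\sharp_{\pi,\xi}(\beta)\big)=g\big(-\Phi X+\eta(X)\xi,\,-\Phi Y+\eta(Y)\xi\big)$. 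The two cross terms die since $g(\Phi X,\xi)=\eta(\Phi X)=0$, the $\xi$-$\xi$ term contributes $\eta(X)\eta(Y)\,g(\xi,\xi)=\eta(X)\eta(Y)$, and the $\Phi$-$\Phi$ term equals $g(X,Y)-\eta(X)\eta(Y)$ by (\ref{met-ass-presque-cont}). Summing, I get $g(X,Y)=g(\sharp_g(\alpha),\sharp_g(\beta))=g^\ast(\alpha,\beta)$, which is precisely the isometry condition.

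The only genuinely non-formal step is the derivation of the skew-adjointness of $\Phi$; everything afterwards is bilinear bookkeeping with the already-noted identities $\eta\circ\Phi=0$, $\flat_g(\xi)=\eta$ and $g(\xi,\xi)=1$. Thus the main obstacle is getting that first substitution right and being careful to kill the $\xi$-terms it produces.
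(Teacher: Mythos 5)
Your proof is correct and follows essentially the same route as the paper: both reduce everything to the identities $\Phi^2=-\mathrm{Id}_{TM}+\eta\otimes\xi$, $\eta\circ\Phi=0$, $\flat_g(\xi)=\eta$, derive the anchor formula $\sharp_{\pi,\xi}(\alpha)=-\Phi(\sharp_g(\alpha))+\eta(\sharp_g(\alpha))\,\xi$, and then expand $g(\sharp_{\pi,\xi}(\alpha),\sharp_{\pi,\xi}(\beta))$ using the compatibility relation. The only (minor, and arguably beneficial) difference is organizational: you establish the full skew-adjointness $g(\Phi X,Y)=-g(X,\Phi Y)$ as a preliminary lemma, whereas the paper proves only its diagonal instance $\pi(\alpha,\alpha)=-\pi(\alpha,\alpha)$ to get antisymmetry and then uses the polarized identity implicitly in the step $g(\sharp_g(\alpha),\Phi(\sharp_g(\beta)))=-g(\Phi(\sharp_g(\alpha)),\sharp_g(\beta))$.
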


\begin{proof}
Let $\alpha\in\Omega^1(M)$ and put $X=\sharp_g(\alpha)$. Using
(\ref{met-ass-presque-cont}) and $\eta\circ\Phi=0$, we get
$\pi(\alpha,\alpha)= g(\Phi(X),\Phi^2(X))$, and since
$\Phi^2=-\textrm{Id}_{TM}+\eta\otimes\xi$ and
$g(\Phi(X),\xi)=\eta\circ\Phi(X)=0$, we get
$$
\pi(\alpha,\alpha)=-g(\Phi(X),X)+\eta(X)g(\Phi(X),\xi) =
-g(\Phi(X),X) = -\pi(\alpha,\alpha),
 $$
and then, that $\pi$ is a bivector field. Let us prove that
$\sharp_{\pi,\xi}$ is an isometry. Let $\alpha\in \Omega^1(M)$.
 Recall that by definition, we have
 $\sharp_{\pi,\xi}(\alpha)=\sharp_\pi(\alpha)+\alpha(\xi)\xi$.
 Since we have on one hand
 $\alpha(\xi)=g(\sharp_g(\alpha),\xi)=\eta(\sharp_g(\alpha))$ and on the
 other hand, for any $\beta\in \Omega^1(M)$,
 $$
\beta(\sharp_\pi(\alpha))=g(\sharp_g(\alpha),\Phi(\sharp_g(\beta)))
=-g(\Phi(\sharp_g(\alpha)),\sharp_g(\beta))=-\beta(\Phi(\sharp_g(\alpha))),
 $$
i.e. $\sharp_\pi(\alpha)=-\Phi(\sharp_g(\alpha))$, we deduce that
\begin{equation}\label{f1}
\sharp_{\pi,\xi}(\alpha)=-\Phi(\sharp_g(\alpha))+\eta(\sharp_g(\alpha))\xi.
\end{equation}
Let $\alpha,\beta\in \Omega^1(M)$. From the formula (\ref{f1}) and
the fact that $g(\Phi(X),\xi)=\eta\circ\Phi(X)=0$ and
$g(\xi,\xi)=1$, we deduce that
$$
g(\sharp_{\pi,\xi}(\alpha),\sharp_{\pi,\xi}(\beta))
=g(\Phi(\sharp_g(\alpha)),\Phi(\sharp_g(\beta)))
+\eta(\sharp_g(\alpha))\eta(\sharp_g(\beta)).
$$
By using the formula (\ref{met-ass-presque-cont}), we get $
g(\sharp_{\pi,\xi}(\alpha),\sharp_{\pi,\xi}(\beta))
=g(\sharp_g(\alpha),\sharp_g(\beta))=g^\ast(\alpha,\beta)$.
\end{proof}

\begin{cor}\label{riem-presque-contact-levi-civita}
Assume $(\Phi,\xi,\eta,g)$ is an almost contact pseudo-Riemannian
structure on $M$ and let $\pi$ be the associated bivector field,
i.e. the one defined in Proposition \ref{pi-ass-presque-contact}. If
the skew algebroid $(T^\ast M,\sharp_{\pi,\xi},\left[.,.\right]
_{\pi ,\xi }^{g})$ is an almost Lie algebroid, then
\begin{equation*}
\sharp_{\pi,\xi}\left( \mathcal{D}_{\alpha }\beta \right)
=\mathcal{\nabla}_{\sharp_{\pi,\xi}(\alpha)}\sharp_{\pi,\xi}(\beta),
\end{equation*}
for every $\alpha ,\beta \in \Omega ^{1}(M)$.
\end{cor}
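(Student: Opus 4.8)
The plan is to recognize that Corollary~\ref{riem-presque-contact-levi-civita} is an immediate specialization of Proposition~\ref{Levi-Civita-triplet}: the latter delivers exactly the desired conclusion $\sharp_{\pi,\xi}(\mathcal{D}_\alpha\beta)=\nabla_{\sharp_{\pi,\xi}(\alpha)}\sharp_{\pi,\xi}(\beta)$ under two hypotheses, namely that $(T^\ast M,\sharp_{\pi,\xi},[.,.]_{\pi,\xi}^g)$ be an almost Lie algebroid and that $\sharp_{\pi,\xi}$ be an isometry. The almost Lie algebroid hypothesis is assumed outright in the statement of the corollary, so the only thing that needs verification is that $\sharp_{\pi,\xi}$ is an isometry.

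The key observation is that this isometry property is precisely what Proposition~\ref{pi-ass-presque-contact} establishes. Given the almost contact pseudo-Riemannian structure $(\Phi,\xi,\eta,g)$ and the bivector field $\pi$ defined there by $\pi(\alpha,\beta)=g(\sharp_g(\alpha),\Phi(\sharp_g(\beta)))$, Proposition~\ref{pi-ass-presque-contact} shows both that $\pi$ is genuinely a bivector field and that the anchor $\sharp_{\pi,\xi}$ satisfies $g(\sharp_{\pi,\xi}(\alpha),\sharp_{\pi,\xi}(\beta))=g^\ast(\alpha,\beta)$ for all $\alpha,\beta\in\Omega^1(M)$, which is exactly the statement that $\sharp_{\pi,\xi}$ is an isometry from $(T^\ast M,g^\ast)$ to $(TM,g)$. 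Thus both hypotheses of Proposition~\ref{Levi-Civita-triplet} are in force.

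The proof therefore consists of two short sentences: invoke Proposition~\ref{pi-ass-presque-contact} to obtain that $\sharp_{\pi,\xi}$ is an isometry, and then apply Proposition~\ref{Levi-Civita-triplet} with the given almost Lie algebroid hypothesis to conclude. There is no genuine obstacle here; the substantive work—the isometry computation using $\Phi^2=-\mathrm{Id}_{TM}+\eta\otimes\xi$, the relation $\flat_g(\xi)=\eta$, and the associated-metric identity \eqref{met-ass-presque-cont}—has already been discharged in Proposition~\ref{pi-ass-presque-contact}, and the passage from the contravariant Levi-Civita derivative to the covariant one has already been discharged in Proposition~\ref{Levi-Civita-triplet} via the Koszul formula. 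The corollary is essentially a bookkeeping composition of these two earlier results, and the only point requiring care is to confirm that the $\pi$ and $\xi$ appearing in the two propositions are the same data, which they are by the hypothesis that $\pi$ is the bivector field of Proposition~\ref{pi-ass-presque-contact} and $\xi$ is the Reeb-type vector field common to the almost contact structure.
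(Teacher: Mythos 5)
Your proposal is correct and is exactly the paper's own argument: the paper proves this corollary by citing Propositions \ref{pi-ass-presque-contact} (which supplies the isometry property of $\sharp_{\pi,\xi}$) and \ref{Levi-Civita-triplet} (which converts the two hypotheses into the desired identity). Your write-up merely makes explicit the bookkeeping that the paper leaves implicit.
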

\begin{proof}
This is a direct consequence of Propositions
\ref{pi-ass-presque-contact} and \ref{Levi-Civita-triplet}.
\end{proof}

Assume that $\eta$ is a contact form on $M$. We say that the
manifold $(M,\eta,g)$ is contact pseudo-Riemannian, or that the
metric $g$ is associated with the contact form $\eta$, if there
exists a vector field endomorphism $\Phi$ of $TM$ such that
$(\Phi,\xi,\eta,g)$ is an almost contact pseudo-Riemannian structure
and that
\begin{equation}\label{met-ass-cont}
g(X,\Phi(Y))=d\eta(X,Y).
\end{equation}
If in addition $g$ is positive definite, we say that $(M,\eta,g)$ is
a contact Riemannian manifold.

\begin{thm}\label{riemannienne-contact-levi-civita}
Assume that $(M,\eta,g)$ is a contact pseudo-Riemannian manifold.
 We have
\begin{equation*}
\sharp_{\eta}\left( \mathcal{D}_{\alpha }\beta \right)
=\mathcal{\nabla}_{\sharp_{\eta}(\alpha)}\sharp_{\eta}(\beta),
\end{equation*}
for every $\alpha ,\beta \in \Omega ^{1}(M)$.
\end{thm}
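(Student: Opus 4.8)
The plan is to deduce the statement from Corollary \ref{riem-presque-contact-levi-civita}, so that the only thing to verify is its hypothesis that the skew algebroid $(T^\ast M,\sharp_{\pi,\xi},[.,.]_{\pi,\xi}^g)$ be an almost Lie algebroid. First I would unpack the contact pseudo-Riemannian hypothesis: there is an endomorphism $\Phi$ making $(\Phi,\xi,\eta,g)$ an almost contact pseudo-Riemannian structure with $g(X,\Phi(Y))=d\eta(X,Y)$, and in particular $\flat_g(\xi)=\eta$, $g(\xi,\xi)=1$, $\Phi(\xi)=0$ and $\eta\circ\Phi=0$. Since $d\eta$ is skew, the relation $g(X,\Phi Y)=d\eta(X,Y)$ forces $g(X,\Phi Y)=-g(\Phi X,Y)$, i.e. $\Phi$ is $g$-skew-symmetric.

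Next I would identify the Jacobi bivector $\pi$ of the contact form $\eta$ with the bivector of Proposition \ref{pi-ass-presque-contact}. Starting from $\flat_\eta(X)=-i_Xd\eta+\eta(X)\eta$ and substituting $d\eta(X,Y)=g(X,\Phi Y)$ and $\eta=\flat_g(\xi)$, a short computation gives $\flat_\eta=\flat_g\circ(\Phi+\eta\otimes\xi)$, hence $\sharp_\eta=(-\Phi+\eta\otimes\xi)\circ\sharp_g$. Combining this with the equality $\sharp_{\pi,\xi}=\sharp_\eta$ already established in Proposition \ref{eta-alg}, and using $\alpha(\xi)=\eta(\sharp_g(\alpha))$, the $\xi$-terms cancel and I obtain $\sharp_\pi(\alpha)=-\Phi(\sharp_g(\alpha))$. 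This is exactly the bivector of Proposition \ref{pi-ass-presque-contact}, so that proposition applies and tells me $\sharp_{\pi,\xi}$ is an isometry; the same identification feeds the computation of the endomorphism $J$ of \eqref{J}.

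The crux is to evaluate the $1$-form $\lambda=g(\xi,\xi)\flat_g(\xi)-\flat_g(J\xi)$ attached to $(\pi,\xi,g)$. From $g(J\sharp_g(\alpha),\sharp_g(\beta))=\pi(\alpha,\beta)=g(\sharp_g(\alpha),\Phi(\sharp_g(\beta)))$ together with the $g$-skew-symmetry of $\Phi$ I would read off $J=-\Phi$, whence $J\xi=-\Phi(\xi)=0$ and $\flat_g(J\xi)=0$. With $g(\xi,\xi)=1$ and $\flat_g(\xi)=\eta$ this gives $\lambda=\eta$, so by definition $[.,.]_{\pi,\xi}^g=[.,.]_{\pi,\xi}^\eta$. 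By Proposition \ref{eta-alg} the latter bracket makes $(T^\ast M,\sharp_{\pi,\xi},[.,.]_{\pi,\xi}^\eta)$ a Lie algebroid, in particular an almost Lie algebroid, so the hypothesis of Corollary \ref{riem-presque-contact-levi-civita} is satisfied. That corollary then yields $\sharp_{\pi,\xi}(\mathcal{D}_\alpha\beta)=\nabla_{\sharp_{\pi,\xi}(\alpha)}\sharp_{\pi,\xi}(\beta)$, and since $\sharp_{\pi,\xi}=\sharp_\eta$ this is precisely the asserted identity.

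I expect the only genuine obstacle to be the sign bookkeeping that produces $J=-\Phi$ and thence $\lambda=\eta$; once this collapses the $g$-bracket onto the contact bracket $[.,.]_{\pi,\xi}^\eta$, the rest is a direct appeal to results already proved. A point to handle carefully is that the endomorphism $\Phi$ is only provided abstractly by the contact pseudo-Riemannian hypothesis, so I would phrase the bivector identification and the computation of $J$ purely through the defining relations $g(X,\Phi Y)=d\eta(X,Y)$, $\Phi(\xi)=0$ and $\flat_g(\xi)=\eta$, without relying on any particular choice of $\Phi$.
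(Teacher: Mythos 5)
Your proposal is correct and follows essentially the same route as the paper's proof: identify the contact Jacobi bivector with the almost-contact bivector of Proposition \ref{pi-ass-presque-contact} (the paper does this by computing $\sharp_g(\alpha)=\Phi(X)+\eta(X)\xi$ for $X=\sharp_\eta(\alpha)$ and applying $\Phi$, you by inverting $\flat_\eta=\flat_g\circ(\Phi+\eta\otimes\xi)$ — the same computation in a slightly different order), then deduce $J=-\Phi$, hence $J\xi=0$ and $\lambda=\eta$, and conclude via Proposition \ref{eta-alg} and Corollary \ref{riem-presque-contact-levi-civita}. No gaps; the sign bookkeeping you flag as the crux is exactly what the paper's proof carries out.
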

\begin{proof}
Let $(\Phi,\xi,\eta,g)$ be the almost contact pseudo-Riemannian
structure associated with the contact pseudo-Riemannian manifold
$(M,\eta,g)$. Let $(\pi,\xi)$ be the Jacobi structure associated
with $\eta$, then $\sharp_\eta=\sharp_{\pi,\xi}$. By Proposition
\ref{eta-alg} and the corollary above, we need only to prove that
$\pi$ is associated with $(\Phi,\xi,\eta,g)$ and that
$\eta=\lambda$. Let $\alpha \in \Omega^1(M)$ and put
$X=\sharp_\eta(\alpha)$. By using (\ref{met-ass-cont}), we have
$$
\sharp_g(\alpha)=\sharp_g(\flat_\eta(X))=-\sharp_g(i_X
d\eta)+\eta(X)\xi = \Phi(X)+\eta(X)\xi.
$$
Therefore, applying $\Phi$,
$$
\Phi(\sharp_g(\alpha))=\Phi^2(X)=-X+\eta(X)\xi=-\sharp_\eta(\alpha)+\alpha(\xi)\xi=-\sharp_\pi(\alpha).
$$
We deduce that
$\pi(\alpha,\beta)=g(\sharp_g(\alpha),\Phi(\sharp_g(\beta)))$ for
any $\alpha,\beta\in \Omega^1(M)$, and that $\Phi=-J$, where $J$ is
the field of endomorphisms associated with the pair $(\pi,g)$.
Hence, $J\xi=0$, and since $g(\xi,\xi)=1$ it follows that
$\lambda=\flat_g(\xi)=\eta$.
\end{proof}

\subsection{Riemannian metric associated with a locally conformally symplectic structure}

Assume that $\omega\in \Omega^2(M)$ is a nondegenerate $2$-form and
let $\theta\in\Omega^1(M)$. Assume that the pair $(\pi,\xi)$ is
associated with the pair $(\omega,\theta)$. We say that the
pseudo-Riemannian metric $g$ is associated with the pair
$(\omega,\theta)$ if $\sharp_{\omega,\theta}:=\sharp_{\pi,\xi}$ is
an isometry, i.e. if
\begin{equation}\label{met-ass-loc-conf-symp}
g\left(\sharp_{\omega,\theta}(\alpha),\sharp_{\omega,\theta}(\beta)\right)
=g^\ast(\alpha,\beta),
\end{equation}
for every $\alpha ,\beta \in \Omega ^{1}(M)$.

If $\theta =0$, then $\xi=0$ and
$\sharp_{\omega,\theta}=\sharp_{\omega}$, and if $J$ and $J^\ast$
are the fields of endomorphisms defined by the formulae (\ref{J}),
then
$$
\begin{array}{ll}
g(\sharp_{\omega,\theta}(\alpha),\sharp_{\omega,\theta}(\beta))
&= g(\sharp_{\omega}(\alpha),\sharp_{\omega}(\beta)) \\
&= g^\ast(\flat_g(\sharp_{\omega}(\alpha)),\flat_g(\sharp_{\omega}(\beta))) \\
&= g^\ast(J^\ast\alpha,J^\ast\beta),
\end{array}
$$
for any $\alpha ,\beta \in \Omega ^{1}(M)$. Hence, in the case
$\theta=0$, the relation (\ref{met-ass-loc-conf-symp}) is equivalent
to
\begin{equation*}
g^\ast\left( J^\ast\alpha ,J^\ast\beta \right) =
g^\ast(\alpha,\beta).
\end{equation*}
If moreover $g$ is positive definite, this last identity means that
the pair $(\omega,g)$ is an almost Hermitian structure on $M$ and
that $J$ is the associated almost complex structure, i.e., we have
$$
g(JX,JY)=g(X,Y) \qquad\textrm{ and }\qquad \omega(X,Y)=g(JX,Y),
$$
for every $X,Y\in\mathfrak{X}(M)$.

\begin{thm}\label{loc-conf-symp-met-ass-levi-civita}
Assume that $(\omega,\theta)$ is a locally conformally symplectic
structure and that $g$ is an associated metric. We have
$$
\sharp_{\omega,\theta}\left(\mathcal{D}_\alpha\beta\right)=\nabla_{\sharp_{\omega,\theta}(\alpha)}\sharp_{\omega,\theta}(\beta)
$$
for every $\alpha,\beta\in \Omega^1(M)$.
\end{thm}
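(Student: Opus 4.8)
The plan is to reduce the statement to Proposition \ref{Levi-Civita-triplet}, whose two hypotheses are that $\sharp_{\pi,\xi}$ be an isometry and that the skew algebroid attached to the $g$-bracket $[.,.]_{\pi,\xi}^g=[.,.]_{\pi,\xi}^\lambda$ (with $\lambda=g(\xi,\xi)\flat_g(\xi)-\flat_g(J\xi)$) be an almost Lie algebroid. The first hypothesis is exactly the assumption that $g$ is associated with $(\omega,\theta)$, i.e.\ relation (\ref{met-ass-loc-conf-symp}). For the second, I would prove the sharper fact that $\lambda=\theta$; then $[.,.]_{\pi,\xi}^g=[.,.]_{\pi,\xi}^\theta$, and Proposition \ref{omega-theta-alg} already identifies this as (the bracket of) a Lie algebroid, in particular an almost Lie algebroid. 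So everything hinges on the identity $\lambda=\theta$.

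To establish it, I would first record two facts valid for \emph{any} triple $(\pi,\xi,g)$: from the definition (\ref{J}) of $J$ one reads off $\sharp_\pi(\alpha)=J\sharp_g(\alpha)$, and the skew-symmetry of $\pi$ makes $J$ skew-adjoint for $g$. Combining these with $\alpha(\xi)=g(\sharp_g(\alpha),\xi)$ and $\sharp_g(\lambda)=g(\xi,\xi)\xi-J\xi$, a one-line computation yields the purely algebraic identity
\[
g^\ast(\alpha,\lambda)=g\big(\sharp_{\pi,\xi}(\alpha),\xi\big),\qquad \alpha\in\Omega^1(M).
\]
Now I bring in the two structure-specific inputs. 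From the proof of Proposition \ref{omega-theta-alg} we have $\sharp_{\pi,\xi}(\theta)=\xi$, and by hypothesis $\sharp_{\pi,\xi}$ is an isometry, so the right-hand side equals $g\big(\sharp_{\pi,\xi}(\alpha),\sharp_{\pi,\xi}(\theta)\big)=g^\ast(\alpha,\theta)$. Hence $g^\ast(\alpha,\lambda)=g^\ast(\alpha,\theta)$ for every $\alpha$, and nondegeneracy of $g^\ast$ forces $\lambda=\theta$.

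With $\lambda=\theta$ in hand the conclusion is immediate: the skew algebroid $(T^\ast M,\sharp_{\pi,\xi},[.,.]_{\pi,\xi}^g)$ is literally the Lie algebroid of Proposition \ref{omega-theta-alg}, hence an almost Lie algebroid, the anchor $\sharp_{\pi,\xi}=\sharp_{\omega,\theta}$ is an isometry, and Proposition \ref{Levi-Civita-triplet} delivers $\sharp_{\omega,\theta}(\mathcal{D}_\alpha\beta)=\nabla_{\sharp_{\omega,\theta}(\alpha)}\sharp_{\omega,\theta}(\beta)$. I expect the only delicate point to be the identity $\lambda=\theta$. The naive route would be to verify instead that $\sharp_{\pi,\xi}(\lambda)=\xi$ and invoke Corollary \ref{lambda-pre-alg}; but computing $\sharp_{\pi,\xi}(\lambda)$ directly produces a stray term proportional to $g(\xi,\xi)J\xi$ and forces one to first prove $g(\xi,\xi)=0$ (a genuine consequence of the isometry, but one whose elementary derivation is fiddly). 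Testing $\lambda-\theta$ against $g^\ast$, as above, rather than applying $\sharp_{\pi,\xi}$, sidesteps this obstacle entirely.
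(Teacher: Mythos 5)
Your proposal is correct and takes essentially the same route as the paper: both reduce the theorem to Propositions \ref{Levi-Civita-triplet} and \ref{omega-theta-alg} and establish the key identity $\lambda=\theta$ by the same computation, using $\sharp_\pi=J\circ\sharp_g$, the skew-adjointness of $J$, the isometry hypothesis, and $\sharp_{\pi,\xi}(\theta)=\xi$. The only cosmetic difference is that the paper concludes $\sharp_{\pi,\xi}(\lambda)=\xi$ and invokes injectivity of $\sharp_{\pi,\xi}$, whereas you pair $\lambda-\theta$ against $g^\ast$ and invoke nondegeneracy.
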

\begin{proof}
By Propositions \ref{Levi-Civita-triplet} and \ref{omega-theta-alg},
we need only to prove  that $\lambda=\theta$. On one hand, we have $
\sharp_{\pi,\xi}(\theta)=\xi$. On the other hand, for any
$\alpha\in\Omega^1(M)$, we have
$$
\begin{array}{ll}
g(\sharp_{\pi,\xi}(\lambda),\sharp_{\pi,\xi}(\alpha))&
=g(\sharp_g(\lambda),\sharp_g(\alpha))\\
 & =g(\xi,\xi)\alpha(\xi)+g(\xi,J\sharp_g(\alpha)) \\
 & =g(\xi,\xi)\alpha(\xi)+g(\xi,\sharp_\pi(\alpha)) \\
 & =g(\xi,\sharp_{\pi,\xi}(\alpha)).
\end{array}
$$
Since $\sharp_{\pi,\xi}$ is an isometry, hence an isomorphism, then
$\sharp_{\pi,\xi}(\lambda)=\xi$.
\end{proof}

\begin{cor}\label{levi-civita-omega}
Under the same hypotheses of the theorem above, we have
$$
\mathcal{D}\pi(\alpha,\beta,\gamma)=\nabla
\omega(\sharp_{\omega,\theta}(\alpha),\sharp_{\omega,\theta}(\beta),\sharp_{\omega,\theta}(\gamma)).
$$
\end{cor}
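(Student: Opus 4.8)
The plan is to relate the contravariant derivative $\mathcal{D}$ of $\pi$ to the covariant derivative $\nabla$ of $\omega$ by transporting everything through the isometry $\sharp_{\omega,\theta}$, using the result of Theorem \ref{loc-conf-symp-met-ass-levi-civita} that $\sharp_{\omega,\theta}(\mathcal{D}_\alpha\beta)=\nabla_{\sharp_{\omega,\theta}(\alpha)}\sharp_{\omega,\theta}(\beta)$. The key observation is that since $g$ is an associated metric, $\sharp_{\omega,\theta}$ intertwines the cometric $g^\ast$ and the metric $g$, so the pairing that defines $\mathcal{D}\pi$ should convert into the analogous pairing defining $\nabla\omega$ once the forms are pushed to vector fields.

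First I would recall the defining relation between $\pi$ and $\omega$. Since $(\pi,\xi)$ is associated with $(\omega,\theta)$, for any $1$-forms $\alpha,\beta$ we have $\pi(\alpha,\beta)=\omega(\sharp_{\omega,\theta}(\alpha),\sharp_{\omega,\theta}(\beta))$; this uses that $i_{\sharp_\pi(\alpha)}\omega=-\alpha$ together with $\xi=\sharp_\omega(\theta)$, which should let me rewrite $\pi(\alpha,\beta)$ in terms of $\omega$ evaluated on the images $X=\sharp_{\omega,\theta}(\alpha)$, $Y=\sharp_{\omega,\theta}(\beta)$, $Z=\sharp_{\omega,\theta}(\gamma)$. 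I would verify this identity carefully, since the presence of $\xi$ in $\sharp_{\pi,\xi}=\sharp_\pi+\,\cdot(\xi)\xi$ means one cannot simply use $\sharp_\omega$; one must check that the extra $\xi$-terms are consistent with $i_\xi\omega=-\theta$.

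Next I would write out the contravariant covariant-derivative formula for the $2$-tensor $\pi$,
\[
\mathcal{D}\pi(\alpha,\beta,\gamma)=\sharp_{\pi,\xi}(\alpha)\cdot\pi(\beta,\gamma)-\pi(\mathcal{D}_\alpha\beta,\gamma)-\pi(\beta,\mathcal{D}_\alpha\gamma),
\]
and the parallel formula for $\nabla\omega$ with $X,Y,Z$ as above. Substituting $\pi(\beta,\gamma)=\omega(\sharp_{\omega,\theta}(\beta),\sharp_{\omega,\theta}(\gamma))$ and applying Theorem \ref{loc-conf-symp-met-ass-levi-civita} to replace $\sharp_{\omega,\theta}(\mathcal{D}_\alpha\beta)$ by $\nabla_X\,\sharp_{\omega,\theta}(\beta)$, each of the three terms in $\mathcal{D}\pi$ should match term-by-term with the corresponding term in $\nabla\omega(X,Y,Z)$. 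The main point making this work is that $\sharp_{\omega,\theta}(\alpha)=X$ acts as the same derivation in both pictures, so the anchor-derivative term transcribes directly.

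The step I expect to be the main obstacle is the bookkeeping in the first identity $\pi(\alpha,\beta)=\omega(\sharp_{\omega,\theta}(\alpha),\sharp_{\omega,\theta}(\beta))$: because $\sharp_{\omega,\theta}$ differs from $\sharp_\omega$ by the rank-one correction $\alpha\mapsto\alpha(\xi)\xi$, I must confirm that the cross terms involving $\xi$ cancel, which relies precisely on $i_\xi\omega=-\theta$ and $\theta(\xi)=\omega(\xi,\xi)=0$. Once that identity is secured, the remainder is the formal substitution described above and is routine given Theorem \ref{loc-conf-symp-met-ass-levi-civita}.
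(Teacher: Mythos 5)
Your proposal is correct and follows essentially the same route as the paper: both first establish the key identity $\pi(\alpha,\beta)=\omega\left(\sharp_{\pi,\xi}(\alpha),\sharp_{\pi,\xi}(\beta)\right)$ (the cross terms in $\xi$ indeed cancel, using $i_{\sharp_\pi(\alpha)}\omega=-\alpha$, $i_\xi\omega=-\theta$ and $\omega(\xi,\xi)=0$), and then expand $\nabla\omega\left(\sharp_{\omega,\theta}(\alpha),\sharp_{\omega,\theta}(\beta),\sharp_{\omega,\theta}(\gamma)\right)$ term by term, replacing $\nabla_{\sharp_{\omega,\theta}(\alpha)}\sharp_{\omega,\theta}(\beta)$ by $\sharp_{\omega,\theta}(\mathcal{D}_\alpha\beta)$ via Theorem \ref{loc-conf-symp-met-ass-levi-civita} to recover $\mathcal{D}\pi(\alpha,\beta,\gamma)$.
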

\begin{proof}
We have $\omega \left( \xi ,\sharp_{\pi}(\alpha) \right) =-
i_{\sharp_{\pi}(\alpha)}\omega(\xi) =\alpha(\xi)$ and likewise
$\omega(\xi,\sharp_\pi(\beta))=\beta(\xi)$, consequently
\begin{equation}\label{omega-pi-xi}
\omega(\sharp_{\pi,\xi}(\alpha)
,\sharp_{\pi,\xi}(\beta))=\pi(\alpha,\beta).
\end{equation}
It suffices now to compute $\nabla
\omega(\sharp_{\pi,\xi}(\alpha),\sharp_{\pi,\xi}(\beta),\sharp_{\pi,\xi}(\gamma))
$ and use the theorem above.
\end{proof}

\section{Compatibility of the triple $(\pi,\xi,g)$}

\subsection{Definition}

We say that the metric $g$  is compatible with the pair $(\pi,\xi)$
or that the triple $(\pi,\xi,g)$ is compatible if
\begin{equation}\label{COMPATIBILITE2}
\mathcal{D}\pi(\alpha,\beta,\gamma)\!=\! \frac{1}{2}\left(
\gamma(\xi) \pi(\alpha ,\beta)\!-\!\beta(\xi) \pi(\alpha,\gamma)\!
-\! J^\ast\gamma(\xi) g^\ast(\alpha,\beta) \!+\! J^\ast\beta(\xi)
g^\ast(\alpha,\gamma) \right),
\end{equation}
for every $\alpha ,\beta,\gamma \in \Omega ^{1}(M)$. The formula
(\ref{COMPATIBILITE2}) can also be written in the form
\begin{equation}\label{COMPATIBILITE}
\left( \mathcal{D}_{\alpha }J^\ast\right) \beta =\frac{1}{2}\left(
\pi(\alpha,\beta)\flat_g(\xi) - \beta(\xi)J^\ast\alpha +
g^\ast(\alpha,\beta) J^\ast\flat_g(\xi) + J^\ast\beta(\xi)\alpha
\right),
\end{equation}
for any $\alpha ,\beta \in \Omega ^{1}(M)$.

The compatibility in the case $\xi$ is the zero vector field means
that $(M,\pi,g)$ is a pseudo-Riemannian Poisson manifold, and
Riemannian Poisson if moreover the metric $g$ is positive definite,
see \cite{boucetta1,boucetta2}.

\subsection{$\frac{1}{2}$-Kenmotsu manifolds}

Recall, see for instance \cite[\S\,6.6]{blair}, that an almost
contact Riemannian structure $(\Phi,\xi,\eta,g)$ on $M$ is said to
be $\frac{1}{2}$-Kenmotsu if we have
\begin{equation*}
\left( \nabla _{X}\Phi \right)(Y) = \displaystyle\frac{1}{2}\left(
g(\Phi(X),Y) \xi -\eta(Y)\Phi(X)\right),
\end{equation*}
for any $X,Y\in \mathfrak{X}(M)$.

\begin{lem}\label{phi-sharp-pi}
Assume that $(\Phi,\xi,\eta,g)$ is an almost contact
pseudo-Riemannian structure on $M$ and let $\pi$ be the associated
bivector field. If the skew algebroid $(T^\ast
M,\sharp_{\pi,\xi},\left[.,.\right] _{\pi ,\xi }^{g})$ is an almost
Lie algebroid, then
$$
\sharp_{\pi,\xi}\left((\mathcal{D}_{\alpha}
J^\ast)\beta\right)=-\left(\nabla_{\sharp_{\pi,\xi}(\alpha)}\Phi\right)(\sharp_{\pi,\xi}(\beta)),
$$
for every $\alpha,\beta\in\Omega^1(M)$.
\end{lem}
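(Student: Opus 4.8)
The plan is to transport the whole computation from the contravariant side to the covariant side through the isometry $\sharp_{\pi,\xi}$, using Corollary \ref{riem-presque-contact-levi-civita} to replace $\mathcal{D}$ by the Levi-Civita connection $\nabla$, and then to recognize the covariant derivative of $\Phi$. The two ingredients I need are the algebraic link between $J^\ast$ and $\Phi$ under $\sharp_{\pi,\xi}$, and the derivation rule for $(\mathcal{D}_\alpha J^\ast)\beta$.

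First I would record the relation between $J$ and $\Phi$. From the proof of Proposition \ref{pi-ass-presque-contact} we have $\sharp_\pi(\alpha)=-\Phi(\sharp_g(\alpha))$, while the definition (\ref{J}) of $J$ gives $\sharp_\pi=J\circ\sharp_g$; comparing, $\Phi=-J$, and hence $\sharp_g(J^\ast\gamma)=J\sharp_g(\gamma)=-\Phi(\sharp_g(\gamma))$ for every $\gamma\in\Omega^1(M)$.

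The central step is the commutation relation
$$\sharp_{\pi,\xi}(J^\ast\gamma)=-\Phi\bigl(\sharp_{\pi,\xi}(\gamma)\bigr),\qquad \gamma\in\Omega^1(M).$$
Starting from $\sharp_{\pi,\xi}(\gamma)=-\Phi(\sharp_g(\gamma))+\eta(\sharp_g(\gamma))\xi$ (formula (\ref{f1})), I would first use $\eta\circ\Phi=0$ and $\flat_g(\xi)=\eta$ to see that $(J^\ast\gamma)(\xi)=\eta(\sharp_g(J^\ast\gamma))=\eta(-\Phi\sharp_g(\gamma))=0$, so that $\sharp_{\pi,\xi}(J^\ast\gamma)=\sharp_\pi(J^\ast\gamma)=-\Phi(\sharp_g(J^\ast\gamma))=\Phi^2(\sharp_g(\gamma))$. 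On the other hand $\Phi(\xi)=0$ gives $-\Phi(\sharp_{\pi,\xi}(\gamma))=\Phi^2(\sharp_g(\gamma))$, and the two expressions coincide by $\Phi^2=-\mathrm{Id}_{TM}+\eta\otimes\xi$. I expect this commutation relation to be the main obstacle, since it is the point where all the almost contact identities ($\Phi^2$, $\Phi\xi=0$, $\eta\circ\Phi=0$, $\flat_g(\xi)=\eta$) must be assembled correctly and with the right signs.

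Finally, writing $X=\sharp_{\pi,\xi}(\alpha)$ and $Y=\sharp_{\pi,\xi}(\beta)$, I would expand $(\mathcal{D}_\alpha J^\ast)\beta=\mathcal{D}_\alpha(J^\ast\beta)-J^\ast(\mathcal{D}_\alpha\beta)$, apply $\sharp_{\pi,\xi}$, and invoke Corollary \ref{riem-presque-contact-levi-civita} (valid since the skew algebroid is an almost Lie algebroid) in the form $\sharp_{\pi,\xi}(\mathcal{D}_\alpha\gamma)=\nabla_X\sharp_{\pi,\xi}(\gamma)$. Combined with the commutation relation, the first term becomes $\nabla_X\sharp_{\pi,\xi}(J^\ast\beta)=-\nabla_X(\Phi Y)$, while the second becomes $\sharp_{\pi,\xi}(J^\ast(\mathcal{D}_\alpha\beta))=-\Phi(\nabla_X Y)$. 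Taking the difference gives $-\nabla_X(\Phi Y)+\Phi(\nabla_X Y)=-(\nabla_X\Phi)(Y)$, which is exactly the asserted identity $-\bigl(\nabla_{\sharp_{\pi,\xi}(\alpha)}\Phi\bigr)(\sharp_{\pi,\xi}(\beta))$.
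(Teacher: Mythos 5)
Your proof is correct and takes essentially the same route as the paper's: you establish the key commutation relation $\sharp_{\pi,\xi}\circ J^{\ast}=-\Phi\circ\sharp_{\pi,\xi}$ (the paper's formula (\ref{pi-xi-J-ast-Phi}), which the paper derives slightly more directly from (\ref{f1}) and $\sharp_g\circ J^{\ast}=J\circ\sharp_g$, while you verify both sides equal $\Phi^{2}(\sharp_g(\gamma))$), and then combine the Leibniz expansion of $(\mathcal{D}_{\alpha}J^{\ast})\beta$ with Corollary \ref{riem-presque-contact-levi-civita} exactly as the paper does.
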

\begin{proof}
By using the formula (\ref{f1}) and the fact that we have
$\sharp_g\circ J^\ast=J\circ \sharp_g$, we deduce that $
\sharp_{\pi,\xi}(J^\ast\alpha) = -\Phi(\sharp_g(J^\ast\alpha))+
\eta(\sharp_g(J^\ast\alpha))\xi=-\Phi(\sharp_\pi(\alpha))=-\Phi(\sharp_{\pi,\xi}(\alpha)).
$ Therefore
\begin{equation}\label{pi-xi-J-ast-Phi}
\sharp_{\pi,\xi}\circ J^\ast=-\Phi \circ \sharp_{\pi,\xi}.
\end{equation}
Hence, with Corollary \ref{riem-presque-contact-levi-civita}, we
have
\begin{eqnarray*}
\sharp_{\pi,\xi}\left( \left( \mathcal{D}_{\alpha }J^\ast\right)
\beta \right) &=&\sharp_{\pi,\xi}\left( \mathcal{D}_{\alpha }\left(
J^\ast\beta \right) \right) -\left( \sharp _{\pi ,\xi }\circ
J^\ast\right) \left( \mathcal{D}_{\alpha
}\beta \right) , \\
&=&\mathcal{\nabla }_{\sharp _{\pi ,\xi }\left( \alpha \right)
}\left( \sharp _{\pi ,\xi }\left( J^\ast\beta \right) \right) +\Phi
\left( \sharp _{\pi
,\xi }\left( \mathcal{D}_{\alpha }\beta \right) \right) , \\
&=&-\mathcal{\nabla }_{\sharp_{\pi,\xi}(\alpha)}\left( \Phi \left(
\sharp_{\pi,\xi}(\beta)\right) \right) +\Phi \left(
\mathcal{\nabla }_{\sharp_{\pi,\xi}(\alpha)}\sharp_{\pi,\xi}(\beta)\right) , \\
&=&-\left( \mathcal{\nabla }_{\sharp_{\pi,\xi}(\alpha)}\Phi
\right)(\sharp_{\pi,\xi}(\beta)).
\end{eqnarray*}
\end{proof}

\begin{prop}
Under the same hypotheses of the above lemma, the compatibility of
the triple $(\pi,\xi,g)$ is equivalent to
$$
(\nabla_X \Phi)(Y)=\dfrac{1}{2}\left(g(\Phi(X),Y)\xi
-\eta(Y)\Phi(X)\right),
$$
for any $X,Y\in \mathfrak{X}(M)$, and if moreover the metric $g$ is
positive definite, then the triple $(\pi,\xi,g)$ is compatible if
and only if the almost contact Riemannian manifold
$(M,\Phi,\xi,\eta,g)$ is $\frac{1}{2}$-Kenmotsu.
\end{prop}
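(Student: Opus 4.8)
The plan is to reduce the compatibility condition (\ref{COMPATIBILITE}) to the $\frac{1}{2}$-Kenmotsu identity by applying the anchor $\sharp_{\pi,\xi}$ to both sides. Since $g$ is nondegenerate and $\sharp_{\pi,\xi}$ is an isometry by Proposition \ref{pi-ass-presque-contact}, the map $\sharp_{\pi,\xi}$ is a vector bundle isomorphism; hence applying it term by term produces an \emph{equivalent} identity, and, as $\alpha,\beta$ range over $\Omega^1(M)$, the vector fields $X:=\sharp_{\pi,\xi}(\alpha)$ and $Y:=\sharp_{\pi,\xi}(\beta)$ range over all of $\mathfrak{X}(M)$. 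This reduces the whole statement to a pointwise identity in $X,Y$.

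For the left-hand side of (\ref{COMPATIBILITE}), Lemma \ref{phi-sharp-pi} immediately gives $\sharp_{\pi,\xi}\left((\mathcal{D}_\alpha J^\ast)\beta\right)=-(\nabla_X\Phi)(Y)$, so all the analytic content (transporting $\mathcal{D}$ to $\nabla$) is already packaged. For the right-hand side I would evaluate the four summands separately, using the relation (\ref{pi-xi-J-ast-Phi}), namely $\sharp_{\pi,\xi}\circ J^\ast=-\Phi\circ\sharp_{\pi,\xi}$, together with $\Phi(\xi)=0$, $\eta\circ\Phi=0$ and $g(\xi,\xi)=1$. Two cancellations are central. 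First, since $J=-\Phi$ gives $J\xi=0$, one has $J^\ast\flat_g(\xi)=\flat_g(J\xi)=0$, which kills the third term. Second, $(J^\ast\beta)(\xi)=-g\left(\Phi(\sharp_g(\beta)),\xi\right)=-\eta\left(\Phi(\sharp_g(\beta))\right)=0$, which kills the fourth term. For the surviving terms one also records $\sharp_{\pi,\xi}(\flat_g(\xi))=-\Phi(\xi)+\eta(\xi)\xi=\xi$ (from (\ref{f1})) and $\sharp_{\pi,\xi}(J^\ast\alpha)=-\Phi(X)$.

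The two surviving scalar coefficients are then rewritten in terms of $X,Y$ via the isometry. Using the definition of $\pi$ in (\ref{J}) and (\ref{pi-xi-J-ast-Phi}), $\pi(\alpha,\beta)=g^\ast(J^\ast\alpha,\beta)=g\left(\sharp_{\pi,\xi}(J^\ast\alpha),\sharp_{\pi,\xi}(\beta)\right)=-g(\Phi(X),Y)$; and since $\xi=\sharp_{\pi,\xi}(\flat_g(\xi))$, the isometry gives $\eta(Y)=g(\xi,Y)=g^\ast(\flat_g(\xi),\beta)=\beta(\xi)$. Substituting, the image under $\sharp_{\pi,\xi}$ of the right-hand side of (\ref{COMPATIBILITE}) collapses to $\frac{1}{2}\left(-g(\Phi(X),Y)\xi+\eta(Y)\Phi(X)\right)$. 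Equating it with $-(\nabla_X\Phi)(Y)$ and multiplying by $-1$ shows that the compatibility of $(\pi,\xi,g)$ is equivalent to $(\nabla_X\Phi)(Y)=\frac{1}{2}\left(g(\Phi(X),Y)\xi-\eta(Y)\Phi(X)\right)$ for all $X,Y\in\mathfrak{X}(M)$. When $g$ is positive definite this is exactly the defining identity of a $\frac{1}{2}$-Kenmotsu structure, which yields the second assertion.

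The main obstacle I anticipate is not conceptual but bookkeeping on the right-hand side: verifying that the two terms carrying $J^\ast\flat_g(\xi)$ and $(J^\ast\beta)(\xi)$ genuinely vanish (which rests on $J=-\Phi$, hence $J\xi=0$ and $\eta\circ\Phi=0$), and fixing the sign in $\pi(\alpha,\beta)=-g(\Phi(X),Y)$ correctly so that the final sign matches the $\frac{1}{2}$-Kenmotsu normalization. Once these are settled, the equivalence and its specialization to the Riemannian case are immediate.
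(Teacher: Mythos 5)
Your proof is correct and follows essentially the same route as the paper's: both arguments kill the terms involving $J^\ast\flat_g(\xi)$ and $J^\ast\beta(\xi)$, apply the isomorphism $\sharp_{\pi,\xi}$ to (\ref{COMPATIBILITE}), and conclude via Lemma \ref{phi-sharp-pi} together with $\sharp_{\pi,\xi}(\eta)=\xi$ and $\sharp_{\pi,\xi}(J^\ast\alpha)=-\Phi(X)$. The only cosmetic difference is that you compute $\pi(\alpha,\beta)=-g(\Phi(X),Y)$ through the isometry and relation (\ref{pi-xi-J-ast-Phi}), whereas the paper obtains $g(X,\Phi(Y))$ directly from the definition of $\pi$; the two agree by skew-symmetry of $\Phi$ with respect to $g$.
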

\begin{proof}
Since we have $J^\ast\flat_g(\xi)=\flat_g(J\xi)=-\flat_g(\Phi
\xi)=0$ and
$$
J^\ast\beta(\xi)=J^\ast\beta(\sharp_g(\eta))=\eta(\sharp_g(J^\ast\beta))=\eta(J
\sharp_g(\beta))=-\eta(\Phi(\sharp_g(\beta)))=0,
$$
then the formula (\ref{COMPATIBILITE}) becomes
$$
\left( \mathcal{D}_{\alpha }J^\ast\right) \beta  =\frac{1}{2}\left(
\pi(\alpha,\beta)\eta- \beta(\xi)J^\ast\alpha \right).
$$
Applying $\sharp_{\pi,\xi}$ which by Proposition
\ref{pi-ass-presque-contact} is an isometry and hence an
isomorphism, this last formula is equivalent to
$$
\sharp_{\pi,\xi}\left(\left( \mathcal{D}_{\alpha }J^\ast\right)
\beta \right) =\frac{1}{2}\left(
\pi(\alpha,\beta)\sharp_{\pi,\xi}(\eta) -
\beta(\xi)\sharp_{\pi,\xi}(J^\ast\alpha) \right).
$$
Now, by Formula (\ref{f1}), we have $\sharp_{\pi,\xi}(\eta)=\xi$,
and if we put $X=\sharp_{\pi,\xi}(\alpha)$
$Y=\sharp_{\pi,\xi}(\beta) $, then we have $\beta(\xi)=\eta(Y)$,
also using (\ref{pi-xi-J-ast-Phi}), we have
$\sharp_{\pi,\xi}(J^\ast\alpha)=-\Phi(X)$ and
$$
\begin{array}{ll}
\pi(\alpha,\beta)
& =g(\sharp_g(\alpha),\Phi(\sharp_g(\beta))) \\
& =-g(\sharp_g(\alpha),\sharp_g(J^\ast\beta)) \\
& =-g^\ast(\alpha,J^\ast\beta) \\
& =g(X,\Phi(Y)).
\end{array}
$$
It remains to use the lemma above.
\end{proof}

\begin{thm}
Assume that $(\eta,g)$ is a contact Riemannian structure on $M$ and
let $(\Phi,\xi,\eta,g)$ be the associated almost contact Riemannian
structure. Assume that $(\pi,\xi)$ is the Jacobi structure
associated with the contact form $\eta$. Then the triple
$(\pi,\xi,g)$ is compatible if and only if $(M,\Phi,\xi,\eta,g)$ is
$\frac{1}{2}$-Kenmotsu.
\end{thm}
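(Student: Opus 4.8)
The plan is to recognize that this statement is essentially an immediate corollary of the (unlabelled) Proposition that directly precedes it, which already asserts the equivalence between the compatibility of $(\pi,\xi,g)$ and the $\frac{1}{2}$-Kenmotsu condition, under four hypotheses: that $(\Phi,\xi,\eta,g)$ be an almost contact pseudo-Riemannian structure, that $\pi$ be its associated bivector field in the sense of Proposition \ref{pi-ass-presque-contact}, that the metric be positive definite, and that the skew algebroid $(T^\ast M,\sharp_{\pi,\xi},[.,.]_{\pi,\xi}^g)$ be an almost Lie algebroid. So the entire task reduces to checking that all four hypotheses genuinely hold in the present contact Riemannian setting; no new computation of connections or covariant derivatives is needed.

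First I would dispatch the easy hypotheses: $(\Phi,\xi,\eta,g)$ is almost contact Riemannian by assumption, so in particular $g$ is positive definite. The one substantive point is the identification of bivector fields: one must verify that the $\pi$ arising from the Jacobi structure associated with the contact form $\eta$ coincides with the bivector field associated with $(\Phi,\xi,\eta,g)$ of Proposition \ref{pi-ass-presque-contact}, that is, $\pi(\alpha,\beta)=g(\sharp_g(\alpha),\Phi(\sharp_g(\beta)))$. This identity, together with the relations $\Phi=-J$ and $\lambda=\flat_g(\xi)=\eta$, was already established inside the proof of Theorem \ref{riemannienne-contact-levi-civita}, so I would simply invoke that computation rather than redo it.

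It then remains to confirm the almost Lie algebroid hypothesis. Because $\lambda=\eta$, the bracket $[.,.]_{\pi,\xi}^g=[.,.]_{\pi,\xi}^\lambda$ is exactly $[.,.]_{\pi,\xi}^\eta$; by Proposition \ref{eta-alg} the triple $(T^\ast M,\sharp_{\pi,\xi},[.,.]_{\pi,\xi}^\eta)$ is a Lie algebroid, hence in particular an almost Lie algebroid. With all four hypotheses of the preceding Proposition verified and $g$ positive definite, its second conclusion yields precisely the desired equivalence. The main — and only mild — obstacle is bookkeeping coherence: one must be certain that the single field $\xi$ simultaneously plays its roles as Reeb field, Jacobi vector field and almost contact characteristic field, that the single $\pi$ is at once the Jacobi and the almost contact bivector, and above all that $\lambda$ collapses to $\eta$ so that Proposition \ref{eta-alg} may be applied to the $g$-bracket; once this coherence is secured the result follows without further effort.
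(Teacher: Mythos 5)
Your proposal is correct and takes essentially the same route as the paper: the paper's own proof likewise reduces the theorem to the unlabelled Proposition preceding it, citing the proof of Theorem~\ref{riemannienne-contact-levi-civita} for the identification of $\pi$ with the almost contact bivector field and for $\lambda=\eta$. Your only addition is to make explicit the verification of the almost Lie algebroid hypothesis (via $\lambda=\eta$ and Proposition~\ref{eta-alg}), a step the paper leaves implicit.
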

\begin{proof}
We have proved that $\pi$ is the bivector field of the proposition
above
 and that $\lambda=\eta$, see the proof of Theorem \ref{riemannienne-contact-levi-civita}.
\end{proof}

\subsection{Locally conformally K\"ahler manifolds}

Recall that if $\omega$ is a nondegenerate $2$-form  and $g$
 an associated Riemannian metric, the almost Hermitian structure
$(\omega,g)$ is Hermitian if the associated almost complex structure
is integrable, and K\"ahler if moreover $\omega$ is closed. Recall
also that if $(\omega,g)$ is almost Hermitian, then it is K\"ahler
if and only if the $2$-form $\omega$ is parallel for the Levi-Civita
connection of $g$.

If $(\omega,\theta)$ is a locally conformally symplectic structure
and $(\omega,g)$ a Hermitian structure, we say that the triple
$(\omega,\theta,g)$ is a locally conformally K\"ahler structure.

We shall prove that if $(\omega,\theta)$ is a locally conformally
symplectic structure on $M$ and that $(\pi,\xi)$ is the associated
Jacobi structure, if $g$ is a Riemannian metric associated with
$\omega$ and with $(\omega,\theta)$, the compatibility of the triple
$(\pi,\xi,g)$ induces a locally conformally K\"ahler structure on
$M$.

\begin{lem}
Assume that $\omega\in \Omega^2(M)$ is a nondegenerate differential
$2$-form and let  $\theta\in\Omega^1(M)$. Assume that $(\pi,\xi)$ is
the pair associated with $(\omega,\theta)$. If the pseudo-Riemannian
metric $g$ is associated with the $2$-form $\omega$ and with the
pair $(\omega,\theta)$, then we have
$$
J\circ \sharp_{\pi,\xi}=\sharp_{\pi,\xi}\circ J^\ast.
$$
\end{lem}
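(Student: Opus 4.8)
The plan is to exploit that, because $g$ is associated with $(\omega,\theta)$, the anchor $\sharp_{\pi,\xi}$ is an isometry and hence a vector bundle isomorphism. This lets me verify the operator identity $J\circ\sharp_{\pi,\xi}=\sharp_{\pi,\xi}\circ J^\ast$ in a pairing form: since $g$ is nondegenerate and $\sharp_{\pi,\xi}$ is onto, it is enough to show that
$$
g\left(J\sharp_{\pi,\xi}(\alpha),\sharp_{\pi,\xi}(\gamma)\right)=g\left(\sharp_{\pi,\xi}(J^\ast\alpha),\sharp_{\pi,\xi}(\gamma)\right)
$$
for all $\alpha,\gamma\in\Omega^1(M)$, because the vectors $\sharp_{\pi,\xi}(\gamma)$ exhaust $TM$.

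First I would evaluate the right-hand side. The isometry condition (\ref{met-ass-loc-conf-symp}) gives $g(\sharp_{\pi,\xi}(J^\ast\alpha),\sharp_{\pi,\xi}(\gamma))=g^\ast(J^\ast\alpha,\gamma)$, and by the defining relation (\ref{J}) of $J^\ast$ this is exactly $\pi(\alpha,\gamma)$.

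Next I would evaluate the left-hand side. Since $g$ is associated with the $2$-form $\omega$, the pair $(\omega,g)$ is an almost Hermitian structure with associated endomorphism $J$, so that $\omega(X,Y)=g(JX,Y)$ for all vector fields $X,Y$. Taking $X=\sharp_{\pi,\xi}(\alpha)$ and $Y=\sharp_{\pi,\xi}(\gamma)$ turns the left-hand side into $\omega(\sharp_{\pi,\xi}(\alpha),\sharp_{\pi,\xi}(\gamma))$, which by the identity (\ref{omega-pi-xi}) is again $\pi(\alpha,\gamma)$. Thus both sides reduce to $\pi(\alpha,\gamma)$ and coincide, and nondegeneracy of $g$ together with surjectivity of $\sharp_{\pi,\xi}$ yields $J\sharp_{\pi,\xi}(\alpha)=\sharp_{\pi,\xi}(J^\ast\alpha)$.

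The argument is short once the reductions are set up, so I expect the main point to be conceptual bookkeeping rather than computation: the two hypotheses on $g$ must enter through genuinely different channels---the almost Hermitian identity $\omega(X,Y)=g(JX,Y)$ for the side carrying $J$, and the isometry (\ref{met-ass-loc-conf-symp}) for the side carrying $J^\ast$---and the bridge between them is the metric-free identity (\ref{omega-pi-xi}), which collapses the geometric pairing $\omega(\sharp_{\pi,\xi}(\alpha),\sharp_{\pi,\xi}(\gamma))$ to $\pi(\alpha,\gamma)$. In particular I do not expect to need the explicit form $\sharp_{\pi,\xi}(\alpha)=\sharp_\pi(\alpha)+\alpha(\xi)\xi$ of the anchor nor the value $\xi=\sharp_\omega(\theta)$, since every term is funneled through $\pi$ by the two displayed identities.
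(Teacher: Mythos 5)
Your proof is correct and is essentially the paper's own argument: both use the association with $\omega$ to rewrite $g(J\sharp_{\pi,\xi}(\alpha),\cdot)$ as $\omega(\sharp_{\pi,\xi}(\alpha),\cdot)$, the metric-free identity (\ref{omega-pi-xi}) to collapse this to $\pi$, the definition (\ref{J}) of $J^\ast$ together with the isometry condition (\ref{met-ass-loc-conf-symp}) for the other side, and finally nondegeneracy plus the fact that an isometry is an isomorphism to conclude. The only difference is organizational: you reduce both sides separately to $\pi(\alpha,\gamma)$, while the paper chains the same equalities in a single line.
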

\begin{proof}
Since the metric $g$ is assumed to be associated with $\omega$ and
using  (\ref{omega-pi-xi}), we get
$$
g(J\sharp_{\pi,\xi}(\alpha),\sharp_{\pi,\xi}(\beta))=\omega(\sharp_{\pi,\xi}(\alpha),\sharp_{\pi,\xi}(\beta))=\pi(\alpha,\beta)=g^\ast(J^\ast\alpha,\beta),
$$
and since  $g$ is also assumed to be associated with the pair
$(\omega,\theta)$, i.e. $\sharp_{\pi,\xi}$ is an isometry, then
$$
g(J\sharp_{\pi,\xi}(\alpha),\sharp_{\pi,\xi}(\beta))=g(\sharp_{\pi,\xi}(J^\ast\alpha),\sharp_{\pi,\xi}(\beta)).
$$
Finally, since $\sharp_{\pi,\xi}$ is an isometry, hence an
isomorphism, the result follows.
\end{proof}

\begin{thm}
Assume that $(\omega,df)$ is a conformally symplectic structure on
$M$ and that $(\pi,\xi)$ is the associated Jacobi structure. If $g$
is a Riemannian metric associated with $\omega$ and with $(\omega
,df)$, then the triple $(\pi,\xi,g)$ is compatible if and only if
the triple $(\omega,df,g)$ is a conformally K\"ahler structure.
\end{thm}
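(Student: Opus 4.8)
The plan is to reduce the compatibility identity (\ref{COMPATIBILITE2}) to a pointwise condition on the covariant derivative $\nabla\omega$, and then to recognise that condition, after a global conformal rescaling, as the K\"ahler condition for the symplectic form $e^{f}\omega$. Throughout, the hypothesis that $g$ is associated with $\omega$ furnishes an almost Hermitian structure $(\omega,g)$ with fundamental form $\omega$ and almost complex structure $J$ (in particular $\omega(X,Y)=g(JX,Y)$ and $J^{2}=-\mathrm{id}$), while the hypothesis that $g$ is associated with $(\omega,df)$ says that $\sharp_{\omega,\theta}=\sharp_{\pi,\xi}$ is an isometry, hence an isomorphism.

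First I would set $X=\sharp_{\omega,\theta}(\alpha)$, $Y=\sharp_{\omega,\theta}(\beta)$, $Z=\sharp_{\omega,\theta}(\gamma)$; since $\sharp_{\omega,\theta}$ is an isomorphism, $(X,Y,Z)$ ranges over all triples of vector fields, and by Corollary \ref{levi-civita-omega} the left-hand side of (\ref{COMPATIBILITE2}) equals $(\nabla_X\omega)(Y,Z)$. To translate the right-hand side I would assemble the dictionary: from (\ref{omega-pi-xi}) we have $\pi(\alpha,\beta)=\omega(X,Y)$; from the isometry property $g^\ast(\alpha,\beta)=g(X,Y)$; from $i_\xi\omega=-\theta$ together with the identity $\delta(\xi)=\omega(\xi,\sharp_{\pi,\xi}(\delta))$ established in the proof of Corollary \ref{levi-civita-omega} we get $\delta(\xi)=-\theta(\sharp_{\pi,\xi}(\delta))$ for every $1$-form $\delta$, whence $\beta(\xi)=-\theta(Y)$ and $\gamma(\xi)=-\theta(Z)$; and, using the preceding Lemma in the form $\sharp_{\pi,\xi}(J^\ast\delta)=J\sharp_{\pi,\xi}(\delta)$, we obtain $J^\ast\beta(\xi)=-\theta(JY)$ and $J^\ast\gamma(\xi)=-\theta(JZ)$. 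Substituting, the compatibility of $(\pi,\xi,g)$ becomes equivalent to
\[
(\nabla_X\omega)(Y,Z)=\tfrac12\bigl(-\theta(Z)\omega(X,Y)+\theta(Y)\omega(X,Z)+\theta(JZ)g(X,Y)-\theta(JY)g(X,Z)\bigr)
\]
for all $X,Y,Z$, with $\theta=df$.

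Next I would exploit that $\theta=df$ is exact to rescale globally. Put $\tilde g=e^{f}g$ and $\tilde\omega=e^{f}\omega$; the conformal symplectic condition $d\omega+df\wedge\omega=0$ gives $d\tilde\omega=0$, so $(\tilde\omega,\tilde g)$ is almost K\"ahler with the same almost complex structure $J$ (conformal rescaling leaves $J$ unchanged and $\tilde\omega(X,Y)=\tilde g(JX,Y)$). Using the standard conformal change of the Levi-Civita connection, $\tilde\nabla_XY=\nabla_XY+\tfrac12\bigl(df(X)Y+df(Y)X-g(X,Y)\sharp_g(df)\bigr)$, a direct computation of $(\tilde\nabla_X\tilde\omega)(Y,Z)$ shows that the $df(X)\omega(Y,Z)$ terms cancel and, after replacing $\omega(\sharp_g(df),Z)=-\theta(JZ)$ and $\omega(Y,\sharp_g(df))=\theta(JY)$, yields
\[
(\tilde\nabla_X\tilde\omega)(Y,Z)=e^{f}\Bigl((\nabla_X\omega)(Y,Z)-\tfrac12\bigl(-\theta(Z)\omega(X,Y)+\theta(Y)\omega(X,Z)+\theta(JZ)g(X,Y)-\theta(JY)g(X,Z)\bigr)\Bigr).
\]
Thus $\tilde\nabla\tilde\omega=0$ is exactly the displayed reformulation of compatibility, so the triple $(\pi,\xi,g)$ is compatible if and only if $\tilde\nabla\tilde\omega=0$.

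Finally I would conclude using the recalled fact that an almost Hermitian (here almost K\"ahler, as $\tilde\omega$ is closed) structure is K\"ahler if and only if its fundamental form is parallel, i.e.\ $\tilde\nabla\tilde\omega=0$, together with the observation that K\"ahlerness of $(\tilde\omega,\tilde g)$ is equivalent to integrability of $J$, hence to $(\omega,g)$ being Hermitian; by definition the latter together with $(\omega,df)$ conformally symplectic means precisely that $(\omega,df,g)$ is conformally K\"ahler. The main technical obstacle is the conformal-change computation of $\tilde\nabla\tilde\omega$ and, above all, the careful sign bookkeeping in both the dictionary and that computation, so that the two expressions match on the nose; everything else is a direct application of Corollary \ref{levi-civita-omega}, the relation (\ref{omega-pi-xi}), the preceding Lemma, and the isometry property of $\sharp_{\pi,\xi}$.
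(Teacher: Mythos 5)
Your proof is correct and follows essentially the same route as the paper's: the same dictionary ($\mathcal{D}\pi(\alpha,\beta,\gamma)=\nabla\omega(X,Y,Z)$ via Corollary \ref{levi-civita-omega}, $\beta(\xi)=-\theta(Y)$, $J^\ast\beta(\xi)=-\theta(JY)$ via the Lemma and (\ref{omega-pi-xi}), plus the isometry property), the same conformal-change computation showing $\nabla^f(e^f\omega)=e^f\Lambda_f$, and the same recalled fact that an almost Hermitian structure with closed fundamental form is K\"ahler iff that form is parallel. The only differences are presentational: you translate the compatibility condition before doing the conformal computation (the paper does the reverse), and you spell out more explicitly the final equivalence between K\"ahlerness of $(e^f\omega,e^fg)$ and $(\omega,df,g)$ being conformally K\"ahler, which the paper treats as immediate.
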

\begin{proof}
We need to prove that the triple $(\pi,\xi,g)$ is compatible if and
only if the pair  $(e^{f}\omega,e^{f}g)$ is compatible, i.e., if and
only if the $2$-form $e^f\omega$ is parallel with respect to the
Levi-Civita connection $\nabla^f$ associated with the metric
$g^f=e^fg$. As the connections $\nabla$ and $\nabla^f$ are related
by the formula
$$
\nabla _{X}^{f}Y=\nabla _{X}Y+\frac{1}{2}\left(
X(f)Y+Y(f)X-g(X,Y)\textrm{grad}_g f \right),
$$
where $\textrm{grad}_gf=\sharp_g(df)$, we deduce that
$$
\begin{array}{lll}
\nabla^f\omega(X,Y,Z) & = & \nabla\omega(X,Y,Z)-X(f)\omega(Y,Z) \\ & & -\frac{1}{2}Y(f)\omega(X,Z)+\frac{1}{2}Z(f)\omega(X,Y) \\
& & +\frac{1}{2}\left(g(X,Y)\omega(\textrm{grad}_g
f,Z)-g(X,Z)\omega(\textrm{grad}_g f,Y)\right),
\end{array}
$$
and then that
$$
\nabla^f (e^f\omega)(X,Y,Z)=
e^f\left(X(f)\omega(Y,Z)+\nabla^f\omega(X,Y,Z)\right) =e^f
\Lambda_f(X,Y,Z),
$$
where we have set
$$
\begin{array}{lll}
\Lambda_f(X,Y,Z) &=& \nabla\omega(X,Y,Z)-\dfrac{1}{2}\left(Y(f)\omega(X,Z)-Z(f)\omega(X,Y)\right) \\
& & +\frac{1}{2}\left(g(X,Y)\omega(\textrm{grad}_g
f,Z)-g(X,Z)\omega(\textrm{grad}_g f,Y)\right).
 \end{array}
$$
It follows that $\nabla^f(e^f\omega)=0$ if and only if
$\Lambda_f=0$, hence that the pair $(e^f\omega,e^fg)$ is compatible
if and only if
$$
\begin{array}{lll}
\nabla\omega(X,Y,Z) & \!=\! & \dfrac{1}{2}\left(Y(f)\omega(X,Z)- Z(f)\omega(X,Y) -g(X,Y)\omega(\textrm{grad}_g f,Z) \right. \\
& & \left. +g(X,Z)\omega(\textrm{grad}_g f,Y)\right).
\end{array}
$$
Let us prove now that this last identity is equivalent to the
formula (\ref{COMPATIBILITE2}). Let $\alpha ,\beta, \gamma \in
\Omega ^{1}(M)$ be such that $X=\sharp_{\pi,\xi}(\alpha)$,
$Y=\sharp_{\pi,\xi}(\beta)$ and $Z=\sharp_{\pi,\xi}(\gamma)$. By
Corollary \ref{levi-civita-omega}, we have $\nabla
\omega(X,Y,Z)=\mathcal{D}\pi(\alpha,\beta,\gamma)$. On the other
hand, setting $\theta=df$, we have $
Y(f)=\theta(Y)=\theta(\sharp_\pi(\beta))+\beta(\xi)\theta(\xi)=-\beta(\sharp_\pi(\theta))=-\beta(\xi)$
and likewise $Z(f)=-\gamma(\xi)$. Also, by (\ref{omega-pi-xi}), we
have $\omega(X,Y)=\pi(\alpha,\beta)$ and
$\omega(X,Z)=\pi(\alpha,\gamma)$. Finally, since the metric $g$ is
associated with $\omega$, it follows that
$$
\begin{array}{ll}
\omega(\textrm{grad}_g f, Y) & =-\omega(Y,\sharp_g(\theta)) \\
& =-g(JY,\sharp_g(\theta)) \\
& =-\theta(JY) \\
& =\omega(\xi,JY) \\
& =\omega(\sharp_{\pi,\xi}(\theta),J\sharp_{\pi,\xi}(\beta)),
\end{array}
$$
and since $g$ is associated with $\omega$ and with
$(\omega,\theta)$, by using the lemma above and (\ref{omega-pi-xi}),
we get
$$
\omega(\textrm{grad}_g f,
Y)=\omega(\sharp_{\pi,\xi}(\theta),\sharp_{\pi,\xi}(J^\ast\beta))=\pi(\theta,J^\ast\beta)=J^\ast\beta(\sharp_\pi(\theta))=J^\ast\beta(\xi)
$$
and likewise $\omega(\textrm{grad}_g f, Z)=J^\ast\gamma(\xi)$.
\end{proof}



\bigskip
Y. A\"{\i}t Amrane, Laboratoire Alg\`ebre et Th\'eorie des
Nombres,\\
Facult\'e de Math\'ematiques,\\
USTHB, BP 32, El-Alia, 16111 Bab-Ezzouar, Alger, Algeria. \\
e-mail : yacinait@gmail.com \bigskip\\
A. Zeglaoui, Laboratoire Alg\`ebre et Th\'eorie des
Nombres,\\
Facult\'e de Math\'ematiques,\\
USTHB, BP 32, El-Alia, 16111 Bab-Ezzouar, Alger, Algeria. \\
e-mail : ahmed.zeglaoui@gmail.com \\


\begin{thebibliography}{1}
\bibitem{blair} D. E. Blair, \textit{Riemannian geometry of contact and
symplectic manifolds.} Pogress in mathematics, vol. 203, 2nd
Edition, Birkh\"auser, 2010.
\bibitem{boucetta1} M. Boucetta, \textit{Compatibilit\'{e} des structures
pseudo-riemanniennes et des structures de Poisson.} C. R. Acad. Sci.
Paris, Ser. I 333 (2001), 763--768.
\bibitem{boucetta2} M. Boucetta, \textit{Poisson manifolds with  compatible pseudo-metric and pseudo-Riemannian Lie
algebras.} Differential Geom. Appl. 20 (2004), 279--291.
\bibitem{grabowski} J. Grabowski, M. J\'o\'zwikowski,
\textit{Pontryagin Maximum Principle on almost Lie algebroids.}
SIAM J. Control Optim. 49 (2011), 1306--1357.
\bibitem{lichnerowicz} A. Lichnerowicz, \textit{Les vari\'et\'es de Jacobi et leurs alg\`ebres de Lie associ\'ees.} J. Math. pures et appl. 51 (1978), 453--488.
\bibitem{marle} C.-M. Marle, \textit{On Jacobi manifolds and Jacobi
bundles.} in "Symplectic geometry, groupoids, and integrable
systems", S\'eminaire Sud Rhodanien de G\'eom\'etrie \`a Berkeley
(1989), Math. Sci. Res. Inst. Publ. 20, Springer-Verlag, New York,
1991, 227--246.
\end{thebibliography}
\end{document}